\title[Free fermions and Schur expansions of multi-Schur functions]{Free fermions and Schur expansions of multi-Schur functions}
\author{Shinsuke Iwao}
\address{Department of Mathematics, Tokai University, 4-1-1, Kitakaname, Hiratsuka, Kanagawa 259-1292, Japan.}
\email{iwao@tokai.ac.jp}
\date{\today}
\newtheorem{thm}{Theorem}[section]
\newtheorem{prop}[thm]{Proposition}
\newtheorem{lemma}[thm]{Lemma}
\newtheorem{defi}[thm]{Definition}
\newtheorem{example}[thm]{Example}
\newtheorem{rem}[thm]{Remark}
\newtheorem{cor}[thm]{Corollary}
\def\ZZ{\mathord{\mathbb{Z}}}
\newcommand{\bm}[1]{\mbox{\boldmath{$#1$}}}
\def\ket#1{\vert #1 \rangle}
\def\bra#1{\langle #1 \vert}
\def\zet#1{\left\vert{#1}\right\vert}
\def\bx{{\bf x}}
\def\by{{\bf y}}
\def\bp{{\bf p}}
\def\bq{{\bf q}}
\def\bt{{\bf t}}
\begin{document}

\begin{abstract}

Multi-Schur functions are symmetric functions that generalize the supersymmetric Schur functions, the flagged Schur functions, and the refined dual Grothendieck functions, which have been intensively studied by Lascoux. 
In this paper, we give a new free-fermionic presentation of them.
The multi-Schur functions are indexed by a partition and two ``tuples of tuples'' of indeterminates.
We construct a family of linear bases of the fermionic Fock space that are indexed by such data and prove that they correspond to the multi-Schur functions through the boson-fermion correspondence.
By focusing on some special bases, which we call refined bases, we give a straightforward method of expanding a multi-Schur function in the refined dual Grothendieck polynomials.
We also present a sufficient condition for a multi-Schur function to have its Hall-dual function in the completed ring of symmetric functions.

\smallskip
\noindent \textbf{Keywords.} multi-Schur functions, free fermions, boson-fermion correspondence, 

\noindent \textbf{MSC Classification.}
05E05, 13M10
\end{abstract}

\maketitle

\section{Introduction}\label{sec:intro}

In the series of papers \cite{date1982I,date1982II} on classical integrable systems, Date-Jimbo-Miwa showed that the standard basis of the free-fermion Fock space naturally corresponds to the Schur functions through the boson-fermion correspondence.
This result provides a useful tool for studying algebraic properties of symmetric polynomials such as the Jacobi-Trudi formula and the Cauchy identity (See \cite[\S 9]{miwa2012solitons}).
The fermionic description has been generalized by Lam~\cite{lam2006combinatorial} to a wider class of symmetric functions including Hall-Littlewood polynomials and Macdonald polynomials.

In \cite{iwao2020freefermions,iwao2020freefermion,iwao2021neutralfermionic}, the author of this paper gave new fermionic presentations of the (dual) stable Grothendieck polynomials~\cite{lascoux1982structure,lascoux1983symmetry} and the $K$-theoretic $Q$-functions~\cite{ikeda2011bumping}.
A key method used there is to construct linear bases of the Fock space that correspond with desired symmetric functions.

This paper aims to extend the previous results to the multi-Schur functions, which are intensively studied in Lascoux's textbook \cite{lascoux2003symmetric}.
As proved by Motegi-Scrimshaw~\cite{motegi2020refined}, Amanov-Yeliussizov~\cite{amanov2020determinantal}, and Kim~\cite{kim2020jacobitrudi,KIM2021105415}, the multi-Schur functions generalize various important symmetric functions such as the refined dual Grothendieck polynomials~\cite{liu2020refined}.
We show that, as with the stable Grothendieck polynomials, the multi-Schur functions have a useful fermionic description. 
Many algebraic identities such as the Jacobi-Trudi formula are derived from this description.
This result is a fermionic counterpart to the combinatorial/algebraic approaches concerning symmetric Grothendieck polynomials~\cite{Matsumura2017algebraic,matsumura2019flagged,motegi2013vertex,motegi2014k,WheelerZinnJustin+2019+159+195} and their variants~\cite{hwang2021refined,yeliussizov2017duality,YELIUSSIZOV2019453,Yeliussizov2020dual}.

The fermionic presentation is useful in the calculation of the Hall inner product.
Let $\Lambda(X)$ be the ring of symmetric functions in infinitely many variables $X_1,X_2,\dots$ (\S \ref{sec:3.1}).
As noted by various authors (see, for example, \cite[\S 3]{iwao2020freefermion}), we often need a certain extended ring $\widehat{\Lambda}(X)$ of $\Lambda(X)$ to deal with dual functions.
We show that many (but not all) multi-Schur functions have their dual in the extended ring $\widehat{\Lambda}(X)$.
A sufficient condition for a multi-Schur function to have the dual is given (assumption \eqref{eq:assumption_ast} in Lemma \ref{lemma:simple_tuple}).

Each symmetric function in this paper is parameterized by a pair of \textit{tuples of tuples} $\bx/\by$ with
\[
\bx=(x^{(1)},x^{(2)},\dots),\qquad \by=(y^{(1)},y^{(2)},\dots),
\]
where $x^{(i)}=(x^{(i)}_1,x^{(i)}_2,\dots,x^{(i)}_{M_i})$ and $y^{(i)}=(y^{(i)}_1,y^{(i)}_2,\dots,y^{(i)}_{N_i})$ are tuples of finitely many indeterminates (might be empty).
The symbol $\bm{\emptyset}=(\emptyset,\emptyset,\dots)$ will denote the \textit{tuple of empty tuples}.

For $t=(t_1,t_2,\dots)$, we let $t[i]=(t_1,\dots,t_{i-1})$ be the tuple consisting of the first $i-1$ letters of $t$.
In the sequel, we often focus on the tuple of tuples $[\bt]$ that is written as
\begin{equation}\label{eq:refined_tuple}
[\bt]:=(t[1],t[2],\dots).
\end{equation}
We will call a tuple of the form \eqref{eq:refined_tuple} a \textit{refined tuple}.
We prove that every refined tuple possesses the ``orthonormality'' property (Theorem \ref{thm:orthonormalily_of_braket}), which is a key tool for calculating (generalized) Schur expansions.

In this paper, we present two families of symmetric functions: $s_{\lambda}(X)_{\bx/\by}$ and $s_{\lambda}(X)^{\bx}$.
The former is the multi-Schur function in $\Lambda(X)$ and the latter is some new symmetric function in $\widehat{\Lambda}(X)$.
The $s_{\lambda}(X)_{\bx/\by}$ is defined for any $\bx$ and $\by$ while $s_{\lambda}(X)^{\bx}$ is defined only if $\bx$ satisfies a certain finiteness condition (see \eqref{eq:assumption_ast} in Lemma \ref{lemma:simple_tuple}).
We will see that if $\bx=[\bt]$ and $\by=\bm{\emptyset}$ for some refined tuple $[\bt]$, $s_\lambda(X)_{[\bt]}$ and $s_\lambda(X)^{[\bt]}$ are Hall dual with each other (Proposition \ref{prop:dual_functions}).

The following is a list of symmetric functions in our scope.
\begin{center}
\begin{tabular}{c|c|c}
%first line
{\bx} and \by & 
$s_{\lambda}(X)_{\bx/\by}$ in $\Lambda(X)$& 
$s_{\lambda}(X)^{\bx}$ in $\widehat{\Lambda}(X)$\\\hline\hline
%second line
General $\bx$ and $\by$ &
Multi-Schur function &
---\\\hline
%third line
   \begin{tabular}{c}
   $\bx=(x^{(1)},x^{(2)},\dots)$, $\by=\bm{\emptyset}$,\\
   $x^{(i)}=(x_1,\dots,x_{f_i})$,\\
   where $f=(f_1\leq f_2\leq\dots)$\\
   is a flagging
   \end{tabular}&
   \begin{tabular}{c}
   $s_{\lambda}(0)_{\bx}$ is \\
   the flagged \\
   Schur function \\
   in $x_1,x_2,\dots$
   \end{tabular}&
--- \\\hline
%fourth line
   \begin{tabular}{c}
   $\bx$ satisfying \\
   the assumption \eqref{eq:assumption_ast}\\ 
   in Lemma \ref{lemma:simple_tuple},\\
   $\by=\bm{\emptyset}$.
   \end{tabular}&
   \begin{tabular}{c}
   A generalization \\
   of a refined dual \\
   Grothendieck \\
   function
   \end{tabular}&   
   \begin{tabular}{c}
   A generalization \\
   of a refined \\
   Grothendieck \\
   function
   \end{tabular}
\\\hline
%fifth line
   \begin{tabular}{c}
   $\bx=[\bt]$, $\by=\bm{\emptyset}$
   \end{tabular}&
   \begin{tabular}{c}
   Refined dual \\
   Grothendieck \\
   function
   \end{tabular}&   
   \begin{tabular}{c}
   Refined \\
   Grothendieck \\
   function
   \end{tabular}   
\\\hline
%sixth line
   \begin{tabular}{c}
   $\bx=[\bt]$, $\by=\bm{\emptyset}$\\
   with $t=(-\beta,-\beta,\dots)$
   \end{tabular}&
   \begin{tabular}{c}
   Dual stable\\
   Grothendieck \\
   polynomial
   \end{tabular}&   
   \begin{tabular}{c}
   Stable \\
   Grothendieck \\
   polynomial
   \end{tabular}   
\end{tabular}
\end{center}

The paper is organized as follows:
In Section \ref{sec:2}, we give a brief review of free-fermions and the boson-fermion correspondence.
In Section \ref{sec:3}, we introduce the definition of the refined bases and prove their orthonormality, which is the main theorem of this paper.
Section \ref{sec:4} contains free-fermionic presentations of dual functions.
We also prove the Schur expansion of dual functions.
In Section \ref{sec:5}, we extend our result to the skew dual multi-Schur functions.

\section{Free-fermionic presentations of multi-Schur functions}\label{sec:2}

\subsection{Wick's theorem}\label{sec:Wick}

In this section, we review necessary definitions and facts about the boson-fermion correspondence according to the standard textbooks~\cite{kac2013bombay,miwa2012solitons}.
Throughout the paper, we write $[A,B]=AB-BA$ and  $[A,B]_+=AB+BA$.

Assume that $k$ is a field of characteristic $0$.
Let $\mathcal{A}$ be the $k$-algebra of \textit{free fermions} $\psi_n$, $\psi_n^\ast$ ($n\in \ZZ$) with
\begin{equation}\label{eq:free-fermions-relation}
[\psi_m,\psi_n]_+=[\psi^\ast_m,\psi^\ast_n]_+=0,\qquad
[\psi_m,\psi^\ast_n]_+=\delta_{m,n}.
\end{equation}
Let $\ket{0}$, $\bra{0}$ denote the \textit{vacuum vectors}:
\[
\psi_m\ket{0}=\psi^\ast_n\ket{0}=0,\quad
\bra{0}\psi_n=\bra{0}\psi^\ast_m=0,\qquad m< 0,\ n\geq 0.
\]
The \textit{Fock space} (over $k$) is the $k$-space $\mathcal{F}$ generated by the vectors
\begin{equation}\label{eq:elementary-vactors}
\psi_{n_1}\psi_{n_2}\cdots \psi_{n_r}\psi^\ast_{m_1}\psi^\ast_{m_2}\cdots \psi^\ast_{m_s}\ket{0},\
(r,s\geq 0,\ n_1>\dots>n_r\geq 0>m_s>\dots>m_1).
\end{equation}
We also consider the $k$-space $\mathcal{F}^\ast$ generated by the vectors
\begin{equation}\label{eq:elementary-vetcors-dual}
\bra{0}\psi_{m_s}\cdots \psi_{m_2}\psi_{m_1}\psi^\ast_{n_r}\cdots \psi^\ast_{n_2}\psi^\ast_{n_1},\
(r,s\geq 0,\ n_1>\dots>n_r\geq 0>m_s>\dots>m_1).
\end{equation}

The vectors \eqref{eq:elementary-vactors} and \eqref{eq:elementary-vetcors-dual} form bases of $\mathcal{F}$ and $\mathcal{F}^\ast$ respectively (see, for example, \cite[\S 4 and \S 5.2]{kac2013bombay}).
The commutation relation \eqref{eq:free-fermions-relation} determines the left $\mathcal{A}$-module structure of  $\mathcal{F}$ and the right $\mathcal{A}$-module structure of $\mathcal{F}^\ast$.

There exists an anti-algebra involution on $\mathcal{A}$ defined by
\begin{equation}\label{eq:anti-algebra_involution}
{}^\ast:\mathcal{A}\to \mathcal{A};\quad \psi_n\leftrightarrow \psi_n^\ast,
\end{equation}
with $(ab)^\ast=b^\ast a^\ast$ and $(a^\ast)^\ast=a$.
The \textit{transpose} is the $k$-linear involution $\mathcal{F}\leftrightarrow {\mathcal{F}}^\ast$ given by $X\ket{0}\leftrightarrow \bra{0}{X}^\ast$.

Let  
\[
{\mathcal{F}}^\ast\otimes_k\mathcal{F}\to k,\quad 
\bra{w}\otimes \ket{v}\mapsto \langle{w}\vert v\rangle
\]
be the \textit{vacuum expectation value} \cite[\S 4.5]{miwa2012solitons}, that is, a unique $k$-bilinear form with (i) $\langle 0\vert 0\rangle=1$,
(ii) $(\bra{w}\psi_n) \ket{v}=\bra{w} (\psi_n\ket{v})$,
and
(iii) $(\bra{w}\psi_n^\ast) \ket{v}=\bra{w} (\psi_n^\ast\ket{v})$.
From these properties, we may write $\bra{w}X\ket{v}=(\langle{w}\vert X)\ket{v}
=\bra{w}(\vert X\ket{v})
$ for any $X\in \mathcal{A}$.
We often use the abbreviation $\langle X\rangle=\bra{0}X\ket{0}$.

%For an integer $m$,  we define the \textit{shifted vacuum vectors} $\ket{m}$ and $\bra{m}$ as
%\[
%\ket{m}=
%\begin{cases}
%\psi_{m-1}\psi_{m-2}\cdots \psi_0\ket{0}, & m\geq 0,\\
%\psi^\ast_{m} \cdots\psi^\ast_{-2}\psi^\ast_{-1}\ket{0}, & m<0,
%\end{cases}\quad
%%\]
%%\[
%\bra{m}=
%\begin{cases}
%\bra{0}\psi^\ast_0\psi^\ast_1\dots \psi^\ast_{m-1}, & m\geq 0,\\
%\bra{0}\psi_{-1}\psi_{-2}\dots \psi_{m}, & m<0.
%\end{cases}
%\]
%For $r\geq 0$, we have
%\begin{equation}
%\psi_{-r}\ket{-r}=\ket{-r+1},\qquad
%\bra{-r}\psi^\ast_{-r}=\bra{-r+1}.
%\end{equation}

\begin{thm}[Wick's theorem
(see {\cite[\S 2]{alexandrov2013free}, \cite[Exercise 4.2]{miwa2012solitons}})
]\label{thm:Wick}
Let $\{m_1,\dots,m_r\}$ and $\{n_1,\dots,n_{r}\}$ be sets of integers.
Then we have
\[
\langle 
\psi_{m_1}\cdots\psi_{m_{r}}
\psi^\ast_{n_r}\cdots\psi^\ast_{n_{1}}
\rangle
=\det(\langle \psi_{m_i}\psi^\ast_{n_j} \rangle)_{1\leq i,j\leq r}.
\]
\end{thm}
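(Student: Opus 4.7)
The plan is to prove the identity by induction on $r$, peeling off $\psi_{m_1}$ from the left and matching the resulting sum against the Laplace expansion of the target determinant along its first row. The base cases $r=0,1$ are immediate: $\langle 0 \vert 0\rangle=1$ equals the empty determinant, and for $r=1$ both sides read $\langle \psi_{m_1}\psi^\ast_{n_1}\rangle$.

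For the inductive step I would first dispose of the degenerate case by splitting on the sign of $m_1$. A direct computation from \eqref{eq:free-fermions-relation} together with the vacuum conditions yields $\langle\psi_m\psi^\ast_n\rangle=\delta_{m,n}$ when $m<0$ and $\langle\psi_m\psi^\ast_n\rangle=0$ when $m\geq 0$. If $m_1\geq 0$, then $\bra{0}\psi_{m_1}=0$, so the left-hand side vanishes; meanwhile the entire first row of $(\langle\psi_{m_i}\psi^\ast_{n_j}\rangle)$ is zero and the right-hand side vanishes as well. If $m_1<0$, then $\psi_{m_1}\ket{0}=0$, and I iterate $\psi_{m_1}a=\{\psi_{m_1},a\}-a\psi_{m_1}$ (noting that each $\{\psi_{m_1},a\}$ is a scalar) to push $\psi_{m_1}$ past the remaining $N=2r-1$ operators $a_1,\dots,a_N=\psi_{m_2},\dots,\psi_{m_r},\psi^\ast_{n_r},\dots,\psi^\ast_{n_1}$, obtaining
\[
\psi_{m_1}a_1\cdots a_N=\sum_{j=1}^{N}(-1)^{j-1}\{\psi_{m_1},a_j\}\,a_1\cdots\widehat{a_j}\cdots a_N+(-1)^{N}a_1\cdots a_N\psi_{m_1}.
\]
Taking the vacuum expectation value, the tail term vanishes because $\psi_{m_1}\ket{0}=0$; the anticommutators $\{\psi_{m_1},\psi_{m_k}\}$ vanish by \eqref{eq:free-fermions-relation}; and the surviving scalars $\{\psi_{m_1},\psi^\ast_{n_k}\}=\delta_{m_1,n_k}$ coincide with $\langle\psi_{m_1}\psi^\ast_{n_k}\rangle$ precisely because we are in the regime $m_1<0$.

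Since $\psi^\ast_{n_k}$ occupies position $2r-k$ in the list $a_1,\dots,a_N$, each contribution carries the sign $(-1)^{2r-k-1}=(-1)^{k+1}$ and multiplies a VEV on $r-1$ creation/annihilation pairs; by the inductive hypothesis this latter VEV is the $(r-1)\times(r-1)$ minor obtained from $(\langle\psi_{m_i}\psi^\ast_{n_j}\rangle)$ by deleting row $1$ and column $k$. Summing over $k=1,\dots,r$ reproduces the cofactor expansion of the desired determinant along its first row. The main obstacle is the sign bookkeeping: one has to verify that the positional sign $(-1)^{j-1}$ coming from $j-1$ anticommutations, compounded with the reverse ordering of the $\psi^\ast_{n_k}$, produces exactly the Laplace cofactor sign $(-1)^{1+k}$. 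The initial case split on $m_1$ is what synchronizes the scalar $\{\psi_{m_1},\psi^\ast_{n_k}\}$ with the two-point function $\langle\psi_{m_1}\psi^\ast_{n_k}\rangle$, an identification that fails when $m_1\geq 0$ and is the reason a separate treatment of that case is required.
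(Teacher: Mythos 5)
Your proof is correct: the case split on the sign of $m_1$ correctly synchronizes $[\psi_{m_1},\psi^\ast_{n_k}]_+=\delta_{m_1,n_k}$ with the two-point function, and the sign $(-1)^{2r-k-1}=(-1)^{1+k}$ matches the Laplace cofactor expansion along the first row. The paper itself gives no proof of this theorem --- it only cites the references --- and your induction-plus-anticommutation argument is essentially the standard proof found there, so there is nothing to compare beyond noting that your write-up is a complete and valid justification of the stated identity.
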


For an integer $m$,  we define the \textit{shifted vacuum vectors} $\ket{m}$, $\bra{m}$ as
\[
\ket{m}=
\begin{cases}
\psi_{m-1}\psi_{m-2}\cdots \psi_0\ket{0}, & m\geq 0,\\
\psi^\ast_{m} \cdots\psi^\ast_{-2}\psi^\ast_{-1}\ket{0}, & m<0
\end{cases}
%\]
%\[
%\bra{m}=
%\begin{cases}
%\bra{0}\psi^\ast_0\psi^\ast_1\dots \psi^\ast_{m-1}, & m\geq 0,\\
%\bra{0}\psi_{-1}\psi_{-2}\dots \psi_{m}, & m<0.
%\end{cases}
\]
and $\bra{m}=(\ket{m})^\ast$.
Then, from Wick's theorem, we have:
\begin{cor}\label{cor:dual}
Suppose that $m_1>m_2>\dots>m_{r}\geq -r$ and $n_1>n_2>\dots>n_{s}\geq -r$.
Then we have
\[
\bra{-r}
\psi^\ast_{m_r}\cdots\psi^\ast_{m_{1}}
\psi_{n_1}\cdots\psi_{n_{r}}
\ket{-r}
=
\begin{cases}
1, & \mbox{if } m_i=n_i\mbox{ for all } i,\\
0, & \mbox{otherwise}.
\end{cases}
\]
%where $m=(m_1,\dots,m_r)$, $n=(n_1,\dots,n_s)$.
\end{cor}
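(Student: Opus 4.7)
My approach is to apply Wick's theorem (Theorem~\ref{thm:Wick}) after reducing the LHS to a standard vacuum expectation. Substituting $\bra{-r}=\bra{0}\psi_{-1}\cdots\psi_{-r}$ and $\ket{-r}=\psi^\ast_{-r}\cdots\psi^\ast_{-1}\ket{0}$ gives
\[
L = \bra{0}\psi_{-1}\cdots\psi_{-r}\,\psi^\ast_{m_r}\cdots\psi^\ast_{m_1}\,\psi_{n_1}\cdots\psi_{n_r}\,\psi^\ast_{-r}\cdots\psi^\ast_{-1}\ket{0}.
\]
To invoke Theorem~\ref{thm:Wick}, which requires all $\psi$'s to precede all $\psi^\ast$'s, I move the middle block $\psi^\ast_{m_r}\cdots\psi^\ast_{m_1}$ past $\psi_{n_1}\cdots\psi_{n_r}$ using $\psi^\ast_a\psi_b=\delta_{a,b}-\psi_b\psi^\ast_a$ repeatedly. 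This produces a sum indexed by partial matchings between $\{m_i\}$ and $\{n_j\}$ (the matched pairs forced to satisfy $m_i=n_j$ via the Kronecker deltas), with each summand an appropriately signed vacuum expectation of the unmatched operators now in Wick form.

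Each such Wick-form expectation is then, by Theorem~\ref{thm:Wick}, a determinant of two-point functions $\langle\psi_a\psi^\ast_b\rangle=\delta_{a,b}\mathbf{1}_{a<0}$. Since the only nonzero entries have $a=b<0$, the determinant vanishes unless every remaining $\psi$-value (the outer $-1,\dots,-r$ together with the unmatched $n_j$'s) coincides bijectively with a remaining $\psi^\ast$-value (the unmatched $m_i$'s together with the outer $-r,\dots,-1$). This forces the multisets $\{m_1,\dots,m_r\}$ and $\{n_1,\dots,n_r\}$ to agree, whereupon the strict orderings $m_1>\cdots>m_r$ and $n_1>\cdots>n_r$ pin down the diagonal matching $m_i=n_i$.

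The main obstacle is sign bookkeeping: the $(-1)^{r^2}$ from reordering the two middle blocks, the signs of the individual partial contractions, and the signatures inside each Wick determinant must combine to $+1$ on the diagonal matching. The ``identity'' block formed by the outer $\psi_{-i}$--$\psi^\ast_{-i}$ pairings localizes the sign analysis, and the strict orderings eliminate all off-diagonal configurations; the remaining diagonal sign can then be verified directly.
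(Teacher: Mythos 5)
Your proposal is correct and follows essentially the same route as the paper, which simply derives the corollary "from Wick's theorem" with no further detail: your reduction---expanding the shifted vacua in terms of $\langle 0|$ and $|0\rangle$, normal-ordering the middle blocks to generate partial contractions, and killing every incomplete matching via the determinant of two-point functions $\langle\psi_a\psi^\ast_b\rangle=\delta_{a,b}\mathbf{1}_{a<0}$ (where the hypothesis $m_i,n_j\geq -r$ is exactly what forces any unmatched index either to give a zero row or to duplicate one of the outer rows $-1,\dots,-r$)---is a sound way to fill in the computation the paper omits. The one step you defer, the overall sign on the diagonal matching, does come out to $+1$, most easily seen by contracting innermost-first: $\psi^\ast_{m_1}$ meets the adjacent $\psi_{n_1}$ with coefficient $+1$ in $\psi^\ast_a\psi_b=\delta_{a,b}-\psi_b\psi^\ast_a$, then $\psi^\ast_{m_2}$ meets $\psi_{n_2}$, and so on, leaving $\langle -r|-r\rangle=1$.
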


%Let $\mathcal{F}^0_{(r)}$ and $\mathcal{F}^0$ be the subspace of $\mathcal{F}$ defined as
%\begin{align*}
%&\mathcal{F}^0_{(r)}:=\big\{\psi_{n_1}\dots \psi_{n_r}\ket{-r}\in \mathcal{F}\ ;\ 
%n_1,n_2,\dots,n_r\in \ZZ
%\big\},\\
%&\mathcal{F}^0:=\bigcup_{r=0}^\infty\mathcal{F}^0_{(r)}.
%\end{align*}
%Since $\psi_{-r}\ket{-r}=\ket{-r+1}$, there exists the natural inclusion
%\begin{equation}\label{eq:natural_incl}
%\mathcal{F}^{0}_{(0)}\subset \mathcal{F}^0_{(1)}\subset \cdots\subset \mathcal{F}^0_{(r)}\subset \cdots \subset \mathcal{F}^0.
%\end{equation} 
%

%Let
%$
%\delta_{m,n}:=
%\begin{cases}
%1,& m_i=n_i \mbox{ for all $i$},\\
%0,&\mbox{otherwise}
%\end{cases}
%$
%for two decreasing sequences $m=(m_1>\dots>m_r)$ and $n=(n_1>\dots>n_r)$.
%\begin{cor}\label{cor:dual}
%Suppose that $m_1>m_2>\dots>m_{r}\geq -r$ and $n_1>n_2>\dots>n_{s}\geq -r$.
%Then we have
%\[
%\bra{-r}
%\psi^\ast_{m_r}\cdots\psi^\ast_{m_{2}}\psi^\ast_{m_{1}}
%\psi_{n_1}\psi_{n_2}\cdots\psi_{n_{r}}
%\ket{-r}
%=\delta_{m,n}.
%\]
%%where $m=(m_1,\dots,m_r)$, $n=(n_1,\dots,n_s)$.
%\end{cor}

\subsection{Supersymmetric Schur functions and the boson-fermion correspondence}

We denote by $:\bullet :$ the \textit{normal ordering} (see \cite[\S 2]{alexandrov2013free}, \cite[\S 5.2]{miwa2012solitons}) of free-fermions.
For $m\in \ZZ$, let $a_m=\sum_{i\in \ZZ} :\psi_i\psi^\ast_{i+m}:$ be the \textit{Heisenberg operator} acting on $\mathcal{F}$.
The $a_m$ satisfies the commutation relations~\cite[\S 5.3]{miwa2012solitons}:
\begin{equation}\label{eq:relation_added}
[a_m,a_n]=m\delta_{m+n,0},\qquad
[a_m,\psi_n]=\psi_{n-m},\qquad
[a_m,\psi^\ast_n]=-\psi^\ast_{n+m}.
\end{equation}

%We let $\Lambda$ denote the $k$-algebra of symmetric functions in infinitely many variables $x_1,x_2,\dots$
Let $x=(x_1,x_2,\dots,x_m)$ and $y=(y_1,y_2,\dots,y_n)$ be tuples of finitely many indeterminates (possibly empty).
We define the \textit{complete supersymmetric functions} $h_i(x/y)$ by the generating function
\[
\sum_{i=0}^\infty h_i(x/y)z^i=\prod_{j=1}^m\frac{1}{1-x_jz}\prod_{l=1}^n(1-y_lz).
\]
When $y_i=0$ for all $i$, the $h_i(x/y)$ reduces to the ordinal complete symmetric polynomial $h_i(x)$.
Since any symmetric polynomial is a polynomial in $h_1(x),h_2(x),\dots$, we define the corresponding supersymmetric function by replacing $h_i(x)$ with $h_i(x/y)$.
For example, the \textit{$i$-th supersymmetric power sum} $p_i(x/y)$ is expressed as $p_i(x/y)=x_1^i+x_2^i+\cdots+x_m^i-y_1^i-y_2^i-\dots-y_n^i$.

%Taking the formal limit $m,n\to \infty$, we obtain the supersymmetric function in infinitely many variables\footnote{A rigorous definition of symmetric functions in infinitely many variables can be found in [], for example.}. 
Let $s_{\lambda}(x/y)$ be the supersymmetric Schur function~\cite[\S I-5.~Example 23]{macdonald1998symmetric}.
Then they satisfy
\begin{align}
&s_{\lambda}(x/y)=\sum_{\mu}(-1)^{\zet{\lambda-\mu}}s_\mu(x)s_{(\lambda/\mu)'}(y),\qquad \zet{\lambda}=\sum_i\lambda_i,\label{eq:formula1}\\
&s_\lambda(x/y)=(-1)^{\zet{\lambda}}s_{\lambda'}(y/x),\label{eq:formula2}
\end{align}
where $\lambda'$ is the transpose of $\lambda$.
The following equations 
%which are obtained immediately from (\ref{eq:formula1}--\ref{eq:formula2}) 
are useful:
\begin{align}
&h_i(x/y)=h_i(x)e_0(y)-h_{i-1}(x)e_1(y)+\cdots+(-1)^ih_0(x)e_i(y),\\
&h_i(\emptyset/y)=(-1)^ie_i(y)=e_i(-y),\qquad\mbox{where}\quad  -y=(-y_1,\dots,-y_n),\\
&s_{\lambda}((x\cup p)/(y\cup q))=\sum_{\mu\subset \lambda}s_{\mu}(x/y)s_{\lambda/\mu}(p/q),\\
&h_n\left((x\cup p)/(y\cup q)\right)=\sum_{i+j=n}h_i(x/y)h_{j}(p/q),
\end{align}
where $x\cup p$ be the tuple consisted of all elements of $x$ and $p$.

Let us consider the formal sums $H(x)$ and $H(x/y)$ defined by
\[
H(x)=\sum_{n>0}\frac{p_n(x)}{n}a_n,\qquad
H(x/y)=\sum_{n>0}\frac{p_n(x/y)}{n}a_n=H(x)-H(y).
\]
If the base field $k$ contains all rational functions in $x_1,\dots,x_m,y_1,\dots,y_n$, the $H(x/y)$ defines a $k$-linear map from $\mathcal{F}$ to itself.
From \eqref{eq:relation_added}, we have the commutation relation
\begin{equation}\label{eq:comm_rel_eH_and_psi_n}
e^{H(x/y)}\psi_ne^{-H(x/y)}=\sum_{i=0}^\infty h_i(x/y)\psi_{n-i}.
\end{equation}

Let $\mathcal{F}^0_{(r)}$ and $\mathcal{F}^0$ be the subspace of $\mathcal{F}$ defined as
\begin{align*}
&\mathcal{F}^0_{(r)}:=\big\{\psi_{n_1}\dots \psi_{n_r}\ket{-r}\in \mathcal{F}\ ;\ 
n_1,n_2,\dots,n_r\in \ZZ
\big\},\\
&\mathcal{F}^0:=\bigcup_{r=0}^\infty\mathcal{F}^0_{(r)}.
\end{align*}
Since $\psi_{-r}\ket{-r}=\ket{-r+1}$, there exists the natural inclusion
\begin{equation}\label{eq:natural_incl}
\mathcal{F}^{0}_{(0)}\subset \mathcal{F}^0_{(1)}\subset \cdots\subset \mathcal{F}^0_{(r)}\subset \cdots \subset \mathcal{F}^0.
\end{equation} 
The \textit{boson-fermion correspondence} is the homomorphism
\begin{equation}\label{eq:bs_corresp}
\ket{v}\mapsto \bra{0}e^{H(x)}\ket{v},\quad x=(x_1,x_2,\dots)
\end{equation}
from $\mathcal{F}^0$ to the ring of symmetric functions $\Lambda(x)$ in $x_1,x_2,\dots$.
This map is in fact isomorphic~\cite[Lemma 9.5]{miwa2012solitons}, \cite[Theorem 6.1]{kac2013bombay}.

For a partition $\lambda=(\lambda_1,\lambda_2,\dots,\lambda_r)$, that is, a weakly decreasing sequence of nonnegative integers, we let
\begin{equation}\label{eq:def_of_ket_lambda}
\ket{\lambda}:=
\ket{\lambda_1,\dots,\lambda_r}=
\psi_{\lambda_1-1}\psi_{\lambda_2-2}\cdots\psi_{\lambda_r-r}\ket{-r}
\in \mathcal{F}^0_{(r)}.
\end{equation}
The image of $\ket{\lambda}$ under \eqref{eq:bs_corresp} coincides with the Schur function $s_\lambda(x)$.
This implies the fact that the composition
\[
\mathcal{F}^0_{(r)}\hookrightarrow \mathcal{F}^0\to \Lambda(x)\stackrel{x_{r+1}=x_{r+2}=\cdots =0}{\to}\Lambda(x_1,\dots,x_r)
\]
is isomorphic, where $\Lambda(x_1,\dots,x_r)$ is the ring of symmetric polynomials in $x_1,\dots,x_r$.

By replacing $h_i(x)$ with $h_i(x/y)$, we obtain the following theorem:
\begin{thm}\label{thm:super_Schur}
We have $s_\lambda(x/y)=\bra{0}e^{H(x/y)}\ket{\lambda}$, where $s_\lambda(x/y)$ is the \textit{supersymmetric Schur function} labeled by $\lambda$.
\end{thm}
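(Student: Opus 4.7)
The plan is to mimic the classical Date--Jimbo--Miwa derivation of $s_\lambda(x) = \bra{0}e^{H(x)}\ket{\lambda}$, with the substitution $h_i(x) \mapsto h_i(x/y)$ carried out uniformly throughout. The required ingredients are the commutation relation \eqref{eq:comm_rel_eH_and_psi_n}, the invariance of $\ket{-r}$ under $e^{H(x/y)}$, Wick's theorem, and the supersymmetric Jacobi--Trudi identity.

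First, since $H(x/y) = \sum_{n>0}\frac{p_n(x/y)}{n}a_n$ involves only positive-index Heisenberg operators and $a_n\ket{-r}=0$ for $n>0$ (a standard consequence of $a_n\ket{0}=0$ together with the relations \eqref{eq:relation_added}), one gets $e^{H(x/y)}\ket{-r} = \ket{-r}$. Writing $\ket{\lambda} = \psi_{\lambda_1-1}\psi_{\lambda_2-2}\cdots\psi_{\lambda_r-r}\ket{-r}$ and inserting $e^{-H(x/y)}e^{H(x/y)}$ between adjacent fermion operators, repeated application of \eqref{eq:comm_rel_eH_and_psi_n} yields
\[
e^{H(x/y)}\ket{\lambda} = \sum_{i_1,\ldots,i_r\geq 0}\left(\prod_{k=1}^r h_{i_k}(x/y)\right)\psi_{\lambda_1-1-i_1}\psi_{\lambda_2-2-i_2}\cdots\psi_{\lambda_r-r-i_r}\ket{-r}.
\]

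Next, taking $\bra{0}$ and expanding $\ket{-r} = \psi^\ast_{-r}\cdots\psi^\ast_{-1}\ket{0}$, Wick's theorem (Theorem \ref{thm:Wick}) evaluates each matrix element as
\[
\bra{0}\psi_{\lambda_1-1-i_1}\cdots\psi_{\lambda_r-r-i_r}\ket{-r} = \det\bigl(\delta_{\lambda_a-a-i_a,\,-b}\bigr)_{1\leq a,b\leq r}.
\]
Using multilinearity of the determinant in the rows to pull each sum $\sum_{i_a\geq 0} h_{i_a}(x/y)$ inside then collapses the expression to
\[
\bra{0}e^{H(x/y)}\ket{\lambda} = \det\bigl(h_{\lambda_a - a + b}(x/y)\bigr)_{1\leq a,b\leq r}
\]
(with the convention $h_n = 0$ for $n < 0$). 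The proof is completed by invoking the supersymmetric Jacobi--Trudi identity $s_\lambda(x/y) = \det(h_{\lambda_a - a + b}(x/y))$, which is immediate from the paper's definition of supersymmetric functions as the image of ordinary symmetric functions under $h_i(x) \mapsto h_i(x/y)$.

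The only delicate step is the Wick determinant bookkeeping, which is entirely parallel to the classical Schur case; no new difficulty enters in the supersymmetric extension, because the whole derivation is purely algebraic and compatible with the substitution $h_i(x) \mapsto h_i(x/y)$ at every level.
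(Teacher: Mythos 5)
Your proof is correct and follows essentially the same route as the paper: the paper dispatches this theorem in one line (``replace $h_i(x)$ by $h_i(x/y)$'' in the classical identity $s_\lambda(x)=\bra{0}e^{H(x)}\ket{\lambda}$), and the Wick-determinant computation you spell out is exactly the one the paper performs in the proof of Proposition \ref{prop:multi-Schur}, of which this theorem is the special case $x^{(i)}=x$, $y^{(i)}=y$. Your bookkeeping (the identity $e^{H(x/y)}\ket{-r}=\ket{-r}$, the expansion via \eqref{eq:comm_rel_eH_and_psi_n}, and the reduction to $\det(h_{\lambda_a-a+b}(x/y))$ followed by the supersymmetric Jacobi--Trudi identity, which is immediate from the substitution definition of supersymmetric functions) is accurate.
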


We always identify the partition $(\lambda_1,\dots,\lambda_{r},0)$ with $(\lambda_1,\dots,\lambda_{r})$.
This is compatible with the natural inclusion \eqref{eq:natural_incl} since 
$
\ket{\lambda_1,\dots,\lambda_{r-1},0}
=
\ket{\lambda_1,\dots,\lambda_{r-1}}
$.
We let $\ell(\lambda)$ be the length of $\lambda$, that is, the number of non-zero entries of $\lambda$.

\subsection{Multi-Schur functions}\label{sec:etheta_eTheta}

Let $\psi(z)=\sum_{n\in \ZZ}\psi_nz^n$ and $\psi^\ast(z)=\sum_{n\in \ZZ}\psi^\ast_nz^n$ be the generating functions of $\psi_n$ and $\psi^\ast_n$.
They satisfy the commutation relations $[a_n,\psi(z)]=z^n\psi(z)$ and $[a_n,\psi^\ast(z)]=-z^{-n}\psi^\ast(z)$, which are given from \eqref{eq:relation_added}.
By using \eqref{eq:comm_rel_eH_and_psi_n}, we have
\begin{equation}\label{eq:comm_rel_eH_and_psi(z)}
\begin{aligned}
e^{H(x/y)}\psi(z)e^{-H(x/y)}
&= %\prod_{i=1}^m\frac{1}{1-x_iz}\prod_{j=1}^n(1-y_jz)
\left(
\sum_{i=0}^\infty h_i(x/y)z^i
\right)
\cdot \psi(z)\\
&= \prod_{i=1}^m\frac{1}{1-x_iz}\prod_{j=1}^n(1-y_jz)
\cdot \psi(z)
\end{aligned}
\end{equation}
and
\begin{equation}\label{eq:comm_rel_eH_and_psiast(z)}
\begin{aligned}
e^{H(x/y)}\psi^\ast(z)e^{-H(x/y)}
&= 
\left(
\sum_{i=0}^\infty h_i(y/x)z^{-i}
\right)
\cdot \psi^\ast(z)\\
&= \prod_{i=1}^m(1-x_iz^{-1})\prod_{j=1}^n\frac{1}{1-y_jz^{-1}}
\cdot \psi^\ast(z).
\end{aligned}
\end{equation}

Let $\bx=(x^{(1)},x^{(2)},\dots)$ and $\by=(y^{(1)},y^{(2)},\dots)$ be tuples of tuples where 
\[
x^{(i)}=(x^{(i)}_1,x^{(i)}_2,\dots,x^{(i)}_{M_i}),\quad 
y^{(i)}=(y^{(i)}_1,y^{(i)}_2,\dots,y^{(i)}_{N_i})
\]
are tuples of finitely many indeterminates (see Section \ref{sec:intro}).
For $\ell(\lambda)\leq r$, we define the polynomial 
$S_\lambda(\bx/\by)= S_\lambda(x^{(1)}/y^{(1)},x^{(2)}/y^{(2)},\dots,x^{(r)}/y^{(r)} )$ by
\begin{multline}\label{eq:def_of_mult_Schur}
S_\lambda(\bx/\by):=\\
\bra{0}
\left(e^{H(x^{(1)}/y^{(1)})}\psi_{\lambda_1-1}e^{-H(x^{(1)}/y^{(1)})}\right)
\left(e^{H(x^{(2)}/y^{(2)})}\psi_{\lambda_2-2}e^{-H(x^{(2)}/y^{(2)})}\right)\\
\dots
\left(e^{H(x^{(r)}/y^{(r)})}\psi_{\lambda_r-r}e^{-H(x^{(r)}/y^{(r)})}\right)
\ket{-r}.
\end{multline}

\begin{prop}\label{prop:multi-Schur}
$S_\lambda(\bx/\by)$ is equal to the determinant
\begin{equation}\label{eq:det_formula_multi-Schur}
\det\left(
h_{\lambda_i-i+j}(x^{(i)}/y^{(i)})
\right)_{1\leq i,j\leq r}.
\end{equation}
\end{prop}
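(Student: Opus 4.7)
The plan is to reduce the computation to Wick's theorem by expanding each conjugated fermion inside \eqref{eq:def_of_mult_Schur}. Applying \eqref{eq:comm_rel_eH_and_psi_n} to each of the $r$ factors gives
\[
e^{H(x^{(i)}/y^{(i)})}\psi_{\lambda_i-i}e^{-H(x^{(i)}/y^{(i)})}=\sum_{k_i\geq 0}h_{k_i}(x^{(i)}/y^{(i)})\,\psi_{\lambda_i-i-k_i},
\]
so that
\[
S_\lambda(\bx/\by)=\sum_{k_1,\dots,k_r\geq 0}\left(\prod_{i=1}^r h_{k_i}(x^{(i)}/y^{(i)})\right)\bra{0}\psi_{\lambda_1-1-k_1}\cdots\psi_{\lambda_r-r-k_r}\ket{-r}.
\]

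The next step is to compute the vacuum expectation value. Writing $\ket{-r}=\psi^\ast_{-r}\psi^\ast_{-r+1}\cdots\psi^\ast_{-1}\ket{0}$ and invoking Theorem \ref{thm:Wick} with $n_j=-j$, I obtain
\[
\bra{0}\psi_{m_1}\cdots\psi_{m_r}\ket{-r}=\det\bigl(\langle\psi_{m_i}\psi^\ast_{-j}\rangle\bigr)_{1\leq i,j\leq r}.
\]
A direct check from \eqref{eq:free-fermions-relation} and the vacuum conditions shows $\langle\psi_m\psi^\ast_n\rangle=\delta_{m,n}$ whenever $n<0$, so each matrix entry is $\delta_{m_i,-j}$. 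Substituting $m_i=\lambda_i-i-k_i$, expanding the determinant by the Leibniz formula, and interchanging it with the sum over $(k_1,\dots,k_r)$ selects $k_i=\lambda_i-i+\sigma(i)$ for each permutation $\sigma\in S_r$, yielding
\[
S_\lambda(\bx/\by)=\sum_{\sigma\in S_r}\mathrm{sgn}(\sigma)\prod_{i=1}^r h_{\lambda_i-i+\sigma(i)}(x^{(i)}/y^{(i)})=\det\bigl(h_{\lambda_i-i+j}(x^{(i)}/y^{(i)})\bigr)_{1\leq i,j\leq r},
\]
as desired, under the standard convention $h_n(\cdot)=0$ for $n<0$ (which is compatible with the constraint $k_i\geq 0$).

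The argument is essentially mechanical once Wick's theorem is brought in. The main (minor) point to watch is aligning the ordering of the $\psi^\ast$'s in $\ket{-r}$ with the index convention of Theorem \ref{thm:Wick} so that the sign in the Leibniz expansion matches that of the Jacobi--Trudi determinant; this amounts to reading off the indices in reverse as $n_j=-j$. One also needs to observe that the conjugations do not telescope across different $i$ (the $e^{\pm H}$ involve different alphabets), which is precisely why the Cauchy-product-like sum over $(k_1,\dots,k_r)$ arises and why the factorized form of Wick's theorem produces the determinant rather than a single term.
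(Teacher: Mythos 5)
Your proof is correct and follows essentially the same route as the paper: both rest on the expansion \eqref{eq:comm_rel_eH_and_psi_n} of the dressed fermions and on Wick's theorem applied against $\ket{-r}=\psi^\ast_{-r}\cdots\psi^\ast_{-1}\ket{0}$. The only difference is order of operations --- the paper applies Wick's theorem first and then expands each matrix entry, whereas you expand first and reassemble the determinant via the Leibniz formula --- which amounts to the same use of multilinearity.
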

\begin{proof}
By Wick's theorem \ref{thm:Wick}, we rewrite the vacuum expectation value \eqref{eq:def_of_mult_Schur} as 
\begin{align*}
&\det\left(
\bra{0}
e^{H(x^{(i)}/y^{(i)})}\psi_{\lambda_i-i}e^{-H(x^{(i)}/y^{(i)})}
\psi_{-j}^\ast\ket{0}
\right)_{1\leq i,j\leq r}\\
&=
\det\left(
\sum_{n=0}^\infty
h_n(x^{(i)}/y^{(i)})
\bra{0}
\psi_{\lambda_i-i-n}
\psi_{-j}^\ast\ket{0}
\right)_{1\leq i,j\leq r}\\
&=\det\left(
h_{\lambda_i-i+j}(x^{(i)}/y^{(i)})
\right)_{1\leq i,j\leq r}.
\end{align*}
\end{proof}

The determinant \eqref{eq:det_formula_multi-Schur} is nothing but the Jacobi-Trudi formula for the \textit{multi-Schur function}~\cite[\S 1.4]{lascoux2003symmetric} (see \cite[\S 2.3]{motegi2020refined} also)\footnote{In the terminology of Lascoux's textbook~\cite{lascoux2003symmetric}, the multi-Schur function $S_{\lambda}(\bx/\by)$ is expressed as 
\[
S_{I}(\mathbb{A}_1-\mathbb{B}_1,\mathbb{A}_2-\mathbb{B}_2,\dots,\mathbb{A}_r-\mathbb{B}_r)
\]
with $I=(\lambda_{r},\lambda_{r-1},\dots,\lambda_{1})$, $\mathbb{A}_i=x^{(r-i+1)}$, and $\mathbb{B}_i=y^{(r-i+1)}$.}.

\begin{example}[Supersymmetric Schur functions]
When
\[
\bx=(x,\emptyset,\emptyset,\dots),\quad
\by=(y,\emptyset,\emptyset,\dots),\quad \mbox{with}\quad
x=(x_1,\dots,x_m),\
y=(y_1,\dots,y_n),
\]
the multi-Schur function $S_\lambda(\bx/\by)$ reduces to the supersymmetric Schur function $s_\lambda(x/y)$.
%Moreover, if $y^{(1)}=\emptyset$, they reduce to the ordinal Schur functions. 
\end{example}

\begin{example}[Flagged Schur functions]
Let $f=(f_1\leq f_2\leq \dots)$ be a \textit{flagging}, that is, a weakly increasing string of positive integers.
When 
\[
\bx=(x^{(1)},x^{(2)},\dots),\quad
\by=\bm{\emptyset},\quad \mbox{with}\quad
x^{(i)}=(x_1,\dots,x_{f_i}),
\]
the multi-Schur function $S_\lambda(\bx/\by)$ reduces to the \textit{flagged Schur function} $s_\lambda^f(x)$. 
\end{example}

\begin{example}[Refined dual Grothendieck functions]
%Let $x_1,\dots,x_n$ and $t_1,\dots,t_r$ be two strings of intermediate.
%Let $x=(x_1,\dots,x_n)$ be a tuple of indeterminate.
Motegi-Scrimshaw \cite[\S 3]{motegi2020refined} proved that when 
\[
\bx=(x^{(1)},x^{(2)},\dots),\quad
\by=\bm{\emptyset},\quad \mbox{with}\quad
x^{(i)}=(x_1,\dots,x_n,t_1,\dots,t_{i}),
\]
the multi-Schur function $S_\lambda(\bx/\by)$ reduces to the \textit{refined dual Grothendieck polynomial} $g_\lambda(x;t)$.
It moreover reduces to the ordinal \textit{dual Grothendieck polynomial} when $t_1=\dots=t_r=-\beta$.
\end{example}

Since 
\begin{equation}\label{eq:e^H_is_id}
e^{H(x/y)}\ket{-r}=\ket{-r},
\end{equation}
the expression \eqref{eq:def_of_mult_Schur} is simply rewritten as
\begin{equation}\label{eq:modified_expression}
\bra{0}e^{H(u^{(1)}/v^{(1)} )}\psi_{\lambda_1-1}
e^{H(u^{(2)}/v^{(2)} )}\psi_{\lambda_2-2}\cdots
e^{H(u^{(r)}/v^{(r)} )}\psi_{\lambda_r-r}\ket{-r},
\end{equation}
where $u^{(i)}=x^{(i)}\cup y^{(i-1)}$ and $v^{(i)}=x^{(i-1)}\cup y^{(i)}$.

%$H(u^{(i)})=H(x^{(i)})+H(y^{(i-1)})$ and $H(v^{(i)})=H(x^{(i-1)})+H(y^{(i)})$.

\begin{rem}
When $x^{(i)}=(x_1,\dots,x_n,\overbrace{-\beta,\dots,-\beta}^{i-1})$ and $\by=\bm{\emptyset}$, \eqref{eq:modified_expression} reduces to
\begin{equation}\label{eq:fermionic-presentation-of-dual-Groth}
\bra{0}e^{H(x_1,\dots,x_n)}\psi_{\lambda_1-1}
e^{H(-\beta)}\psi_{\lambda_2-2}\cdots
e^{H(-\beta)}\psi_{\lambda_r-r}\ket{-r},
\end{equation}
%We note that \eqref{eq:fermionic-presentation-of-dual-Groth} 
which is exactly the same as the free fermionic presentation of the dual Grothendieck polynomial $g^\beta_\lambda(x_1,\dots,x_n)$ in the author's previous paper \cite[\S 4.1]{iwao2020freefermion}.
\end{rem}

\section{Generalized Schur expansion}\label{sec:3}

In the sequel, we fix the following four tuples of tuples
\[
\bx=(x^{(1)},x^{(2)},\dots),\
\by=(y^{(1)},y^{(2)},\dots),\
\bp=(p^{(1)},p^{(2)},\dots),\
\bq=(q^{(1)},q^{(2)},\dots).
\]
Suppose that all of the indeterminates here are contained in the base field $k$.

\subsection{The symmetric function $s_\lambda(X)_{\bx/\by}$}\label{sec:3.1}

Let $\Lambda(X)$ be the $k$-algebra of symmetric functions in $X_1,X_2,\dots$.
There exists a nondegenerate $k$-bilinear form 
\[
\Lambda(X)\otimes_k\Lambda(X)\to k;\quad f\otimes g\mapsto \langle f,g\rangle \qquad
\mbox{with}\quad
\langle s_\lambda,s_\mu\rangle=\delta_{\lambda,\mu},
\]
that is called the Hall inner product.
%Note that $\langle s_\lambda,s_\mu\rangle=\delta_{\lambda,\mu}$.
%The Hall inner product extends continuously to 
%$
%\widehat{\Lambda}(X)\otimes_k\Lambda(X)\to k
%$.

\begin{lemma}\label{lemma:fundamental}
Let $\{\ket{\lambda}_1\}_{\lambda}$ and $\{\ket{\lambda}_2\}_{\lambda}$ be two $k$-basis of $\mathcal{F}^0$ labeled by all partitions $\lambda$ (possibly $\lambda=\emptyset$).
Then the following three conditions are equivalent:
\begin{enumerate}
\def\labelenumi{(\roman{enumi})}
\item If $f_\lambda(X)=\bra{0}e^{H(X)}\ket{\lambda}_1$ and $g_\mu(X)=\bra{0}e^{H(X)}\ket{\mu}_2$, then $\langle f_\lambda,g_\mu\rangle=\delta_{\lambda,\mu}$.
\item $\sum_{\lambda}f_\lambda(X)g_\lambda(Y)=\prod_{i,j}(1-X_iY_j)^{-1}$.
\item If $(\ket{\mu}_2)^\ast={}_2\bra{\mu}$, then ${}_2\langle\mu|\lambda\rangle_1=\delta_{\lambda,\mu}$.
\end{enumerate}
\end{lemma}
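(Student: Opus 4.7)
The plan is to expand both bases in the standard basis $\{\ket{\nu}\}_\nu$ of $\mathcal{F}^0$ and thereby reduce all three conditions to a single matrix identity. Writing
$$\ket{\lambda}_1=\sum_\nu A_{\nu\lambda}\ket{\nu}, \qquad \ket{\mu}_2=\sum_\nu B_{\nu\mu}\ket{\nu},$$
linearity of the boson-fermion correspondence \eqref{eq:bs_corresp} yields $f_\lambda(X)=\sum_\nu A_{\nu\lambda}s_\nu(X)$ and $g_\mu(X)=\sum_\nu B_{\nu\mu}s_\nu(X)$, while $k$-linearity of the transpose yields ${}_2\bra{\mu}=\sum_\nu B_{\nu\mu}\bra{\nu}$.

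For (i) $\Leftrightarrow$ (iii), I would combine the orthonormality $\langle s_\nu, s_{\nu'}\rangle=\delta_{\nu,\nu'}$ of Schur functions with the fermionic identity $\langle\nu|\nu'\rangle=\delta_{\nu,\nu'}$ obtained from Corollary \ref{cor:dual} after padding $\nu,\nu'$ to a common length $r$. Both pairings then collapse to the same matrix entry:
$$\langle f_\lambda, g_\mu\rangle=\sum_\nu A_{\nu\lambda}B_{\nu\mu}=(A^\top B)_{\lambda\mu}={}_2\langle\mu|\lambda\rangle_1,$$
so both equal $\delta_{\lambda,\mu}$ precisely when $A^\top B=I$, which establishes the equivalence.

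For (i) $\Leftrightarrow$ (ii), I would expand
$$\sum_\lambda f_\lambda(X)g_\lambda(Y)=\sum_{\nu,\kappa}(AB^\top)_{\nu\kappa}\,s_\nu(X)s_\kappa(Y)$$
and compare with the Cauchy kernel $\prod_{i,j}(1-X_iY_j)^{-1}=\sum_\nu s_\nu(X)s_\nu(Y)$. By linear independence of the products $s_\nu(X)s_\kappa(Y)$, condition (ii) is equivalent to $AB^\top=I$, whereas (i) reads $A^\top B=I$; the remaining task is to identify these two conditions, which is immediate once one works degree by degree on the natural grading of $\mathcal{F}^0$ by $|\lambda|$, where $A$ and $B$ restrict to finite square matrices and a left inverse coincides with a right inverse.

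The step I expect to be the main obstacle is precisely this last one: the sum $\sum_\lambda f_\lambda(X)g_\lambda(Y)$ need not converge in $\Lambda(X)\otimes\Lambda(Y)$ unless the $\ket{\lambda}_i$ respect the grading. I would resolve this either by imposing homogeneity of degree $|\lambda|$ on each basis vector -- which holds in every application of the lemma in the paper -- or more generally by reading (ii) as a formal identity one degree at a time. A minor book-keeping point is the correct use of Corollary \ref{cor:dual}, which demands $r\geq\max(\ell(\nu),\ell(\nu'))$ together with the convention $\ket{\nu_1,\dots,\nu_r,0}=\ket{\nu_1,\dots,\nu_r}$.
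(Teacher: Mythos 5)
The paper offers no proof of this lemma at all---it simply cites Macdonald I.\S 4---so your transition-matrix argument is supplying content the paper omits. Your treatment of (i)$\Leftrightarrow$(iii) is correct and complete: writing $\ket{\lambda}_1=\sum_\nu A_{\nu\lambda}\ket{\nu}$ and $\ket{\mu}_2=\sum_\nu B_{\nu\mu}\ket{\nu}$, both pairings literally equal the same finite sum $\sum_\nu A_{\nu\lambda}B_{\nu\mu}$ (column-finiteness of $A$ makes it finite), using $\langle s_\nu,s_{\nu'}\rangle=\delta_{\nu,\nu'}$ on one side and Corollary \ref{cor:dual} on the other; this is the only implication the paper actually invokes later (Proposition \ref{prop:dual_functions}), and Macdonald's (4.6) is essentially the same computation.

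The soft spot is your handling of (ii), and your proposed repair does not work as stated. You suggest ``imposing homogeneity of degree $|\lambda|$ on each basis vector---which holds in every application of the lemma in the paper,'' but this is false: the bases the paper feeds into this lemma are of the form $\ket{\lambda}_{\bx/\by}=\ket{\lambda}+\sum_{|\mu|<|\lambda|}\alpha_\mu\ket{\mu}$, which are inhomogeneous, and the dual side involves terms of degree $>|\lambda|$. Consequently your degree-by-degree reduction to finite square matrices, which needs $A$ and $B$ to be block-diagonal with respect to the grading, is not available. Two issues must be handled instead: first, the coefficient of $s_\nu(X)s_\kappa(Y)$ in $\sum_\lambda f_\lambda(X)g_\lambda(Y)$ is the a priori infinite sum $\sum_\lambda A_{\nu\lambda}B_{\kappa\lambda}$, so (ii) does not even parse for completely general bases; second, even granting convergence, $A^\top B=I$ versus $AB^\top=I$ is a left-inverse/right-inverse issue for infinite matrices. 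Both are resolved not by gradedness but by invertibility: since $\{\ket{\lambda}_1\}$ is a basis, $A$ has a two-sided \emph{column-finite} inverse $A^{-1}$, and $A^\top B=I$ forces $B=(A^{-1})^\top$ (the relevant triple products are finite sums, so associativity is legitimate); then $\sum_\lambda A_{\nu\lambda}B_{\kappa\lambda}=\sum_\lambda A_{\nu\lambda}(A^{-1})_{\lambda\kappa}$ is a finite sum equal to $\delta_{\nu\kappa}$, which simultaneously establishes convergence and the Cauchy identity. For the converse (ii)$\Rightarrow$(i) one should read (ii) as including the convergence of these coefficient sums. This is a genuine gap in your write-up, though a repairable one---and, to be fair, the paper's bare citation to Macdonald (whose statement assumes graded bases) glosses over exactly the same point.
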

\begin{proof}
See \cite[I.\S 4]{macdonald1998symmetric} for proofs.
\end{proof}

%We now introduce a new family of $k$-basis of $\mathcal{F}^0$ which give a quite large class of super symmetric polynomials.
%For tuples $x^{(i)}$ and $y^{(i)}$ of finitely number of indeterminate, we let
%\[
%\bx=(x^{(1)},x^{(2)},\dots),\qquad 
%\by=(y^{(1)},y^{(2)},\dots)
%\]
%the \textit{tuples of tuples}.
For a partition $\lambda$ and an integer $r\geq \ell(\lambda)$, let
\begin{multline}\label{eq:def_of_ket_dx}
\ket{\lambda}_{\bx/\by}:=
\left(e^{H(x^{(1)}/y^{(1)} )}\psi_{\lambda_1-1}e^{-H(x^{(1)}/y^{(1)} )}\right)
\left(e^{H(x^{(2)}/y^{(2)} )}\psi_{\lambda_2-2}e^{-H(x^{(2)}/y^{(2)} )}\right)\\
\cdots\left(e^{H(x^{(r)}/y^{(r)} )}\psi_{\lambda_r-r}e^{-H(x^{(r)}/y^{(r)} )}\right)
%\psi_{\lambda_{r+1}-r-1}\psi_{\lambda_{r+2}-r-2}\dots \psi_{\lambda_l-l}
\ket{-r}.
\end{multline}
The definition \eqref{eq:def_of_ket_dx} does not depend on the choice of $r$ because  \[
e^{H(x/y)}\psi_{-s}e^{-H(x/y)}\ket{-s}=\psi_{-s}\ket{-s}=\ket{-s-1}.
\]
Since the vector $\ket{\lambda}_{\bx/\by}$ is expanded as
\[
\ket{\lambda}_{\bx/\by}=\ket{\lambda}+\sum_{\zet{\mu}<\zet{\lambda}}\alpha_\mu\ket{\mu},\qquad \alpha_\mu\in k,
\]
the set $\{\ket{\lambda}_{\bx/\by}\}_{\lambda}$ forms a $k$-basis of $\mathcal{F}^0$.

Define the new symmetric function $s_\lambda(X)_{\bx/\by}\in \Lambda(X)$ by
\begin{equation}\label{eq:def_of_sxy}
s_\lambda(X)_{\bx/\by}:=\bra{0}e^{H(X)}\ket{\lambda}_{\bx/\by}.
\end{equation}
When $X_i=0$ for all $i$, $s_\lambda(X)_{\bx/\by}$ reduces to the multi-Schur function $S_\lambda(\bx/\by)$.

\subsection{Refined bases and orthonormality}

What we are interested in is the transition matrix between the two bases $\{\ket{\lambda}_{\bx/\by}\}$ and $\{\ket{\mu}_{\bp/\bq}\}$ of $\mathcal{F}^0$.
Unfortunately, we have no explicit formula for general bases at this stage. Instead, we present a certain subclass of bases whose transition matrices are directly calculated from our fermionic presentation.

Set $t=(t_1,t_2,\dots)$.
We consider the refined tuple $[\bt]=(t[1],t[2],\dots)$, which we have defined in  \eqref{eq:refined_tuple}.
When $\bx=[\bt]$ and $\by=\bm{\emptyset}$, the vector $\ket{\lambda}_{\bx/\by}$ \eqref{eq:def_of_ket_dx} reduces to
\begin{equation}\label{eq:1_refined_form}
\begin{split}
\ket{\lambda}_{[\bt]}=
\psi_{\lambda_1-1}
e^{H(t_1)}\psi_{\lambda_2-2}
e^{H(t_2)} %\psi_{\lambda_3-3}
\cdots
e^{H(t_{r-1})}\psi_{\lambda_r-r}
e^{H(t_{r})}
\ket{-r}.
\end{split}
\end{equation}
To discuss the ``orthonormality'' property, we also define the transposed vector
\begin{equation}\label{eq:def_of_bra_bt}
{}_{[\bt]}\bra{\lambda}:=\bra{-r}
e^{-H(t_r)}\psi^\ast_{\lambda_r-r}
e^{-H(t_{r-1})}\cdots 
e^{-H(t_2)}\psi^\ast_{\lambda_2-2}
e^{-H(t_1)}\psi^\ast_{\lambda_1-1}
\end{equation}
for sufficiently large $r$.
The definition \eqref{eq:def_of_bra_bt} may seem to depend on the choice of $r$. 
However, we will prove later that it is in fact independent of the choice of $r$ when $r$ is sufficiently large. 
See Remark \ref{rem:stability_of_bra}.

The following is the main theorem of this paper:
\begin{thm}[Orthonormality of refined basis]
\label{thm:orthonormalily_of_braket}
We have 
\[
{}_{[\bt]}\langle \mu|\lambda\rangle_{[\bt]}=
\delta_{\lambda,\mu}.
\]
\end{thm}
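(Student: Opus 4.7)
My approach is via generating functions and Wick's theorem. Introducing $\psi(z)=\sum_n\psi_nz^n$ and $\psi^\ast(w)=\sum_n\psi^\ast_nw^n$, I would encode
\[
\ket{\lambda}_{[\bt]} = [z_1^{\lambda_1-1}\cdots z_r^{\lambda_r-r}]\,\psi(z_1)e^{H(t_1)}\psi(z_2)\cdots\psi(z_r)e^{H(t_r)}\ket{-r},
\]
\[
{}_{[\bt]}\bra{\mu} = [w_1^{\mu_1-1}\cdots w_r^{\mu_r-r}]\,\bra{-r}e^{-H(t_r)}\psi^\ast(w_r)e^{-H(t_{r-1})}\cdots e^{-H(t_1)}\psi^\ast(w_1),
\]
and reduce ${}_{[\bt]}\langle\mu|\lambda\rangle_{[\bt]}$ to a scalar prefactor times a purely fermionic vacuum expectation that Wick's theorem can evaluate.

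The key commutation rules, obtained as single-variable specialisations of \eqref{eq:comm_rel_eH_and_psi(z)} and \eqref{eq:comm_rel_eH_and_psiast(z)}, are
\[
e^{H(t_i)}\psi(z)=(1-t_iz)^{-1}\psi(z)e^{H(t_i)},\qquad e^{-H(t_i)}\psi^\ast(w)=(1-t_iw^{-1})^{-1}\psi^\ast(w)e^{-H(t_i)},
\]
together with the vacuum identity $e^{\pm H(t)}\ket{-r}=\ket{-r}$ from \eqref{eq:e^H_is_id}. Pulling every $e^{H(t_i)}$ in the ket to the right, past the $\psi(z_j)$'s, and absorbing it into $\ket{-r}$ produces the prefactor $\prod_{1\leq i<j\leq r}(1-t_iz_j)^{-1}$. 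An analogous reduction of the bra (using the dual identity $\bra{-r}e^{-H(t)}=\bra{-r}$) yields $\prod_{1\leq j\leq i\leq r}(1-t_iw_j^{-1})^{-1}$. When bra and ket are paired, the residual Heisenberg factor $e^{-\sum_iH(t_i)}$ sandwiched between the $\psi^\ast$'s and $\psi$'s must then be commuted through the $\psi(z_j)$'s, generating an extra $\prod_{i,j}(1-t_iz_j)$. After all simplifications the net scalar prefactor is $\prod_{1\leq j\leq i\leq r}\frac{1-t_iz_j}{1-t_iw_j^{-1}}$, and the remaining expectation
\[
V(z,w):=\bra{-r}\psi^\ast(w_r)\cdots\psi^\ast(w_1)\psi(z_1)\cdots\psi(z_r)\ket{-r}
\]
is, by Theorem \ref{thm:Wick}, a Cauchy-type determinant built from the one-point function $\bra{-r}\psi^\ast(w)\psi(z)\ket{-r}=\sum_{n\geq-r}(wz)^n=\frac{(wz)^{-r}}{1-wz}$.

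It then remains to extract the coefficient of $z_1^{\lambda_1-1}\cdots z_r^{\lambda_r-r}w_1^{\mu_1-1}\cdots w_r^{\mu_r-r}$ from the product of this Cauchy determinant with the prefactor and to verify that it equals $\delta_{\lambda,\mu}$. I expect this to follow from a Cauchy--Binet expansion of the determinant paired with careful monomial matching against the $t$-dependent prefactor, so that only the diagonal pairing $\lambda=\mu$ contributes. The main anticipated obstacle is precisely this coefficient extraction, because the prefactor contains negative powers $w_j^{-1}$ while the Cauchy kernel is naturally expanded in nonnegative powers of $w_j$; choosing the correct formal expansion regime and tracking signs will be delicate. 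A secondary subtlety is justifying the dual identity $\bra{-r}e^{-H(t)}=\bra{-r}$, which likely requires $r$ to be sufficiently large, in accordance with the stability comment following the definition \eqref{eq:def_of_bra_bt}. I would begin by verifying the small cases $r=1,2$ by hand to fix conventions before attempting the general argument.
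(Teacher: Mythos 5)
Your reduction stops exactly where the theorem begins. After the bookkeeping that turns ${}_{[\bt]}\langle \mu|\lambda\rangle_{[\bt]}$ into the coefficient of $z_1^{\lambda_1-1}\cdots z_r^{\lambda_r-r}\,w_1^{\mu_1-1}\cdots w_r^{\mu_r-r}$ in
\[
\prod_{1\leq j\leq i\leq r}\frac{1-t_iz_j}{1-t_iw_j^{-1}}\cdot\det\left(\sum_{n\geq -r}(w_iz_j)^n\right)_{1\leq i,j\leq r},
\]
the statement still to be proved is precisely that this coefficient equals $\delta_{\lambda,\mu}$, and for that you offer only the expectation that a Cauchy--Binet expansion plus ``careful monomial matching'' will make all off-diagonal pairings cancel. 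No argument is given for why the $t$-dependent prefactor --- whose $w$-part must be expanded in \emph{negative} powers of the $w_j$, while the Wick kernel carries powers of $w_iz_j$ bounded below by $-r$ --- conspires to kill every term with $\mu\neq\lambda$. That cancellation is the entire content of the orthonormality, and it is nontrivial: for $t=0$ it is the classical Schur orthonormality, but the point of the theorem is the $t$-deformation. So what you have is a correct reformulation of the problem, not a proof. The paper avoids all of this formal-series delicacy by a short induction on the number of fermion factors: using $e^{-H(t_i)}\psi_ne^{H(t_i)}=\psi_n-t_i\psi_{n-1}$ and $e^{-H(t_i)}\psi^\ast_ne^{H(t_i)}=\psi^\ast_n+t_i\psi^\ast_{n+1}+\cdots$, it proves (I) $\psi^\ast_N\ket{\lambda,s}_{[\bt]}=0$ for $N\geq\lambda_1$ and (II) ${}_{[\bt]}\bra{\mu,s}\psi_N=0$ for $N\geq\mu_1$, so the bracket vanishes unless $\lambda_1=\mu_1$, and then (III) peels off the outermost pair $\psi^\ast_{\mu_1-1}\psi_{\lambda_1-1}$, reducing to a strictly smaller instance.

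There is also an outright false statement in your plan: $\bra{-r}e^{-H(t)}=\bra{-r}$ does not hold for \emph{any} $r$, so your hope that it ``requires $r$ to be sufficiently large'' cannot be realized. The operator $H(t)$ is built from the positive Heisenberg modes $a_n$, $n>0$, which annihilate ket vacua ($a_n\ket{m}=0$) but not bra vacua ($\bra{m}a_n\neq 0$ for $n>0$); this is exactly why the paper needs Lemma \ref{lemma:technical_lemma_2} and Remark \ref{rem:stability_of_bra} to show that the definition \eqref{eq:def_of_bra_bt} of ${}_{[\bt]}\bra{\lambda}$ merely \emph{stabilizes} for large $r$, being the transpose of $\ket{\lambda}^{[\bt]}_{[r]}$ with $e^{-H^\ast(t)}\ket{-r}\neq\ket{-r}$. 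Fortunately, your final prefactor $\prod_{1\leq j\leq i\leq r}\frac{1-t_iz_j}{1-t_iw_j^{-1}}$ is nonetheless correct, because the manipulation you actually describe --- pushing each $e^{-H(t_i)}$ to the right past the $\psi^\ast(w_j)$'s and commuting the residual $e^{-\sum_i H(t_i)}$ through the $\psi(z_j)$'s into the ket vacuum --- only uses the valid ket-side identity $e^{\pm H(t)}\ket{-r}=\ket{-r}$; the bra-side identity should simply be deleted. But even with that repaired, the proposal remains a plan whose decisive step is missing.
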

\begin{proof}
The proof is essentially the same as the proof of \cite[Proposition 4.1]{iwao2020freefermion}.
We here note the equations
\begin{align}
&e^{-H(t_{i})}\psi_{\lambda_i-i}e^{H(t_{i})}=\psi_{\lambda_i-i}
-t_i\psi_{\lambda_i-i-1},\\
&e^{-H(t_{i})}\psi^\ast_{\lambda_i-i}e^{H(t_{i})}=\psi^\ast_{\lambda_i-i}
+t_i\psi^\ast_{\lambda_i-i+1}+t_i^2\psi^\ast_{\lambda_i-i+2}+\cdots,
\end{align}
which are obtained from (\ref{eq:comm_rel_eH_and_psi(z)}--\ref{eq:comm_rel_eH_and_psiast(z)}).

%Let $\lambda=(\lambda_1,\dots,\lambda_r)$, $\mu=(\mu_1,\dots,\mu_r)$ and $\bt=(t_1,t_2,\dots)$.
Let $0\leq s\leq r$.
We temporally use the notation
\[
\begin{split}
\ket{\lambda,s}_{[\bt]}=
\psi_{\lambda_{1}-1}
e^{H(t_{1})}\psi_{\lambda_{2}-2}
e^{H(t_{2})} %\psi_{\lambda_3-3}
\cdots
%e^{H(t_{r-1})}
\psi_{\lambda_s-s}
e^{H(t_{s})}
\ket{-r}
\end{split}
\]
and prove the theorem by induction on $s$.
The theorem follows from the three facts:
\begin{enumerate}
\item[ (I).] If $N\geq \lambda_1$, then $\psi^{\ast}_N\ket{\lambda,s}_{[\bt]}=0$.
\item[ (II).] If $N\geq \mu_1$, then ${}_{[\bt]}\bra{\mu,s}\psi_N=0$.
\item[ (III).] If $\lambda_1=\mu_1$, $\lambda'=(\lambda_2-1,\dots,\lambda_r-1)$, $\mu'=(\mu_2-1,\dots,\mu_r-1)$, and $\bt'=(t_2,t_3,\dots)$.
%$\bx'=(x^{(2)},\dots,x^{(r)})$, and $\by'=(y^{(2)},\dots,y^{(r)})$.
Then \[
{}_{[\bt]}\langle\mu,s|\lambda,s\rangle_{[\bt]}={}_{[\bt']}\langle\mu',s-1|\lambda',s-1\rangle_{[\bt']}.
\]
\end{enumerate}
(I) follows from the fact that $\ket{\lambda,s}_{[\bt]}$ is a linear combination of vectors of the form $\psi_{n_1}\psi_{n_2}\dots \psi_{n_s}\ket{-r}$ with $N>n_i$ for each $i$.
%(II) follows from that fact that ${}_{[\bt]}^r\bra{\mu}$ is a linear combination of vectors of the form $\bra{-r}\psi^\ast_{n_r}\cdots\psi^\ast_{n_2}\psi^\ast_{n_1}$ with $N>n_i$ for each $i$.
We prove (II) by induction on $s$.
If $s=0$, (II) is obvious.
For general $s>0$, we have
\begin{align*}
{}_{[\bt]}\bra{\lambda,s}\psi_N
&=
{}_{[\bt]}\bra{\lambda',s-1}e^{-H(t_1)}\psi^\ast_{\mu_1-1}\psi_N\\
&=
-{}_{[\bt]}\bra{\lambda',s-1}e^{-H(t_1)}\psi_N\psi^\ast_{\mu_1-1}\\
&=-{}_{[\bt]}\bra{\lambda',s-1}(\psi_N-t_1\psi_{N-1})e^{-H(t_1)}\psi^\ast_{\mu_1-1}.
\end{align*}
By induction hypothesis, the last expression should be zero, which concludes (II).
(III) is proved as
\begin{equation*}\label{eq:proof_of_(III)}
\begin{aligned}
&{}_{[\bt]}\langle\mu,s|\lambda,s\rangle_{[\bt]}\\
&=
{}_{[\bt']}\bra{\mu',s-1}e^{-H(t_1)}\psi^\ast_{\mu_1-1}\psi_{\lambda_1-1}e^{H(t_1)}\ket{\lambda',s-1}_{[\bt']}\\
&=
{}_{[\bt']}\bra{\mu',s-1}e^{-H(t_1)}(1-\psi_{\lambda_1-1}\psi^\ast_{\mu_1-1})e^{H(t_1)}\ket{\lambda',s-1}_{[\bt']}\\
&=
{}_{[\bt']}\langle\mu',s-1|\lambda',s-1\rangle_{[\bt']}-
{}_{[\bt']}\bra{\mu',s-1}e^{-H(t_1)}\psi_{\lambda_1-1}\psi^\ast_{\mu_1-1}e^{H(t_1)}\ket{\lambda',s-1}_{[\bt']}\\
&=
{}_{[\bt']}\langle\mu',s-1|\lambda',s-1\rangle_{[\bt']}\\
&\hspace{3em}-
\sum_{m=0}^\infty
t_1^m\cdot
{}_{[\bt']}\bra{\mu',s-1}e^{-H(t_1)}\psi_{\lambda_1-1}e^{H(t_1)}\psi^\ast_{\mu_1-1+m}\ket{\lambda',s-1}_{[\bt']}\\
&\stackrel{\mathrm{(I)}}{=}
{}_{[\bt']}\langle\mu',s-1|\lambda',s-1\rangle_{[\bt']}.
\end{aligned}
\end{equation*}
By using (I--III) repeatedly, we conclude the proof.
\end{proof}

Theorem \ref{thm:orthonormalily_of_braket} provides a straightforward method for giving the linear expansion $\ket{\lambda}_{\bx/\by}=\sum_{\mu}C_\lambda^\mu\ket{\mu}_{[\bt]}$.
By using the orthonormality, we obtain the simple formula
\begin{equation}\label{eq:C_mu_lambda}
C_\lambda^\mu={}_{[\bt]}\langle\mu |\lambda\rangle_{\bx/\by},
\end{equation}
which implies the generalized Schur expansion
\begin{equation}\label{eq:expansion_of_multiSchur}
s_\lambda(X)_{\bx/\by}=\sum_\mu {}_{[\bt]}\langle\mu |\lambda\rangle_{\bx/\by}\cdot  s_\mu(X)_{[\bt]}.
\end{equation}

\begin{example}[Schur expansion of $s_\lambda(X)_{\bx/\by}$]
If $t=(0,0,0,\dots)$, then $\ket{\lambda}_{[\bt]}=\ket{\lambda}$.
Assuming $r\geq \max[\ell(\lambda),\ell(\mu)]$, we obtain the determinantial formula
\begin{align*}
C_{\lambda}^\mu=\langle{\mu}|\lambda \rangle_{\bx/\by}
&=
\det\left(
\sum_{l=0}^\infty
h_l(x^{(i)}/y^{(i)})\cdot 
\bra{-r}
\psi^\ast_{\mu_j-j}\psi_{\lambda_i-i-l}
\ket{-r}
\right)_{1\leq i,j\leq r}\\
&=
\det\left(
h_{\lambda_i-\mu_j-i+j}(x^{(i)}/y^{(i)})
\right)_{1\leq i,j\leq r}.
\end{align*}
Since $\lambda_i=0$ for $i>\ell(\lambda)$ and $h_{n}(x^{(i)}/y^{(i)})=0$ for $n<0$, it follows that $\mu\not\subset\lambda\Rightarrow \langle{\mu}|\lambda \rangle_{\bx/\by}=0$.
This fact gives the Schur expansion
\begin{equation}\label{eq:schur_exp_of_s_xy}
s_\lambda(X)_{\bx/\by}=\sum_{\mu\subset \lambda}
\det\left(
h_{\lambda_i-\mu_j-i+j}(x^{(i)}/y^{(i)})
\right)_{1\leq i,j\leq \ell(\lambda)}
% S_{\lambda/\mu}(x^{(1)}/y^{(1)},\dots,x^{(r)}/y^{(r)})
s_\mu(X).
\end{equation}
\end{example}

\begin{example}[Schur expansion of refined dual Grothendieck functions]
Let $r\geq \ell(\lambda)$.
By substituting $\bx=[\bt]$ and $\by=\bm{\emptyset}$ to \eqref{eq:schur_exp_of_s_xy}, we have the Schur expansion
\[
g_\lambda(X;t_1,\dots,t_{r-1})=
\sum_{\mu\subset \lambda}
\det\left(
h_{\lambda_i-\mu_j-i+j}(t_1,\dots,t_{i-1})
\right)_{1\leq i,j\leq \ell(\lambda)}
s_\mu(X).
\]
\end{example}

\begin{rem}\label{rem:skew_multi-Schur}
The function 
\[
S_{\lambda/\mu}(\bx/\by):=\det\left(
h_{\lambda_i-\mu_j-i+j}(x^{(i)}/y^{(i)})
\right)_{i,j}
\] 
is called the skew multi-Schur function~\cite[\S 1.4]{lascoux2003symmetric}.
\end{rem}

\begin{example}[Expansion of $s_\lambda(X)_{\bx/\by}$ in refined dual Grothendieck functions]\label{ex:most_general_expansion}
Let $t=(t_1,t_2,\dots)$ and $r\geq \max[\ell(\lambda),\ell(\mu)]$.
Then we have
\begin{align*}
&{}_{[\bt]}\langle{\mu}|\lambda \rangle_{\bx/\by}\\
&=
\det\left(
\sum_{m=0}^\infty
\sum_{l=0}^\infty
h_m(t_j,t_{j+1},\dots,t_r)\cdot
h_l(x^{(i)}/y^{(i)}\cup (t_1,\dots,t_r))\cdot \right.\\
&\hspace{18em}\left.
\bra{-r}
\psi^\ast_{\mu_j-j+m}\psi_{\lambda_i-i-l}
\ket{-r}
\right)_{1\leq i,j\leq r}\\
&=
\det\left(
\sum_{
\substack{
l+m=\lambda_i-\mu_j-i+j\\
0\leq l\leq \lambda_i-i+r
}
}
h_m(t_j,t_{j+1},\dots,t_r)\cdot
h_l(x^{(i)}/(y^{(i)}\cup (t_1,\dots,t_r)))
\right)_{1\leq i,j\leq r}\\
&=
\det\left(
h_{\lambda_i-\mu_j-i+j}
\left(
x^{(i)}/(y^{(i)}\cup (t_1,\dots,t_{j-1}) )
\right)
\right)_{1\leq i,j\leq r},
\end{align*}
which implies $\mu\not\subset \lambda\Rightarrow {}_{[\bt]}\langle{\mu}|\lambda \rangle_{\bx/\by}=0$.
Finally, we have the generalized Schur expansion
\[
\begin{aligned}
&s_{\lambda}(X)_{\bx/\by}=\\
&\sum_{\mu\subset \lambda} \det\left(
h_{\lambda_i-\mu_j-i+j}
\left(
x^{(i)}/(y^{(i)}\cup (t_1,\dots,t_{j-1}) )
\right)
\right)_{1\leq i,j\leq \ell(\lambda)}\cdot g_\mu(X;t_1,\dots,t_{r-1}).
\end{aligned}
\]
%where $g_\mu(X;t_1,\dots,t_{r-1})$ is the refined dual Grothendieck function.
\end{example}

\section{Dual symmetric functions}\label{sec:4}

This section aims to study the dual functions of multi-Schur functions.
Unfortunately, since $\Lambda(X)$ is infinite-dimensional, not every linear basis has its dual in $\Lambda(X)$.
Thus we often need to use the extended ring $\widehat{\Lambda}(X)$ (see \cite{iwao2020freefermions}, for example) to obtain desired dual functions.
As the ring $\widehat{\Lambda}(X)$ has a linear topology and is complete, an element of $\widehat{\Lambda}(X)$ can be specified by giving a convergent sequence of symmetric functions.

\subsection{$r$-truncated dual function $s^r_\lambda(X)^\bx$}

We will use the ``dual operator''
\begin{equation*}
H^\ast(x/y):=\sum_{n>0}\frac{p_n(x/y)}{n}a_{-n}
\end{equation*}
to define dual functions.
However, since the sum
\[
e^{-H^\ast(x/y)}\psi_{n}e^{H^\ast(x/y)}=\psi_{n}-e_1(x/y)\psi_{n+1}+e_2(x/y)\psi_{n+2}+\cdots
\]
is finite only if $y=\emptyset$, we should assume $y$ to be empty to have a well-defined linear map from $\mathcal{F}$ to itself.

For $r\geq \ell(\lambda)$, set
\begin{equation}\label{eq:def_of_ket_bx_r}
\begin{split}
\ket{\lambda}_r^{\bx}:=
\left(e^{-H^\ast(x^{(1)})}\psi_{\lambda_1-1}e^{H^\ast(x^{(1)})}\right)
\left(e^{-H^\ast(x^{(2)})}\psi_{\lambda_2-2}e^{H^\ast(x^{(2)})}\right)\\
\cdots
\left(e^{-H^\ast(x^{(r)})}\psi_{\lambda_r-r}e^{H^\ast(x^{(r)})}\right)\ket{-r}.
\end{split}
\end{equation}
Since all the expressions on the right hand side of \eqref{eq:def_of_ket_bx_r} are finite sums, the  $\ket{\lambda}_r^{\bx}$ is contained in the Fock space $\mathcal{F}$.
Let us define the new symmetric function 
\[
s^r_\lambda(X)^{\bx}:=\bra{0}e^{H(X)}\ket{\lambda}_r^{\bx}.
\]
We call $s^r_\lambda(X)^{\bx}$ the \textit{$r$-truncated dual function} (see \cite[Section 3]{iwao2020freefermions}).

\begin{example}[Truncated Grothendieck function]\label{example:Groth}
Let $t=(-\beta,-\beta,\dots)$.
Then $s^r_\lambda(X)^{[\bt]}$ coincides with the $r$-truncated Grothendieck function
$G^r_\lambda(X)$ introduced in \cite[\S 3.1]{iwao2020freefermion}.
Even if $r$ tends to be sufficiently large, the sequence of $r$-truncated Grothendieck functions does not stable: $G^r_\lambda(X)\neq G^{r+1}_\lambda(X)\neq G^{r+2}_\lambda(X)\neq \cdots$.
Instead, the sequence converges to some element in the topological ring $\widehat{\Lambda}(X)$. 
The limit $G_\lambda(X):=\lim\limits_{r\to \infty}G_\lambda^r(X)$ is called the stable Grothendieck polynomial.
\end{example}

%Following the example above, we will say that $s_\lambda(X)^{\bx}$ is the \textit{truncated dual function} of $s_\lambda(X)_{\bx/{\bf 0}}$.

%Let ${}^{\bx/\by}\bra{\lambda}:=(\ket{\lambda}_{\bx/\by})^\ast$ be an element of $\mathcal{F}^\ast$.
%We also obtain the generalized Schur expansions of truncated dual functions $s^r_\lambda(X)^{\bx}$ by calculating the expectation value ${}^{\bx/\by}\langle \lambda|\mu\rangle^{\bp}_r={}^r_{\bp}\langle \mu|\lambda\rangle_{\bx/\by}$. %We call $\{\ket{\lambda}^{\bx}\}$ is refined if $\{\ket{\lambda}_{\bx/\bf 0}\}$ is refined.

\begin{example}[Schur expansion of $s^r_\lambda(X)^{\bx}$]
\label{example:Schur_exp_of_s^x}
For $\bx=(x^{(1)},x^{(2)},\dots)$, let
\[
\overline{x}^{(i)}
=\begin{cases}
x^{(i)}, & 1\leq i\leq r,\\
\emptyset, & i>r
\end{cases}
\]
be a ``$r$-truncated'' tuple.
For any $R\geq \max[\ell(\mu),r]$, we have
\[
\begin{aligned}
\langle \mu|\lambda \rangle_r^{\bx}
&=\det\left(
\sum_{m=0}^\infty
(-1)^me_m(\overline{x}^{(i)})
\bra{-R}\psi^\ast_{\mu_j-j}\psi_{\lambda_i-i+m}\ket{-R}
\right)_{1\leq i,j\leq R}\\
&=
\det\left(
e_{-\lambda_i+\mu_j+i-j}(-\overline{x}^{(i)})
\right)_{1\leq i,j\leq R}.
\end{aligned}
\]
Because $e_n(\emptyset)=\delta_{n,0}$ and $\lambda_i=0$ $(i>r)$, the determinant $\langle \mu|\lambda \rangle_r^{\bx}$ must vanish if $\ell(\mu)>r$.
Hence we have the Schur expansion
\[
s^r_\lambda(X)^{\bx}
=\sum_{
\substack{
\mu\supset \lambda\\
\ell(\mu)\leq r
}
} \det\left(
e_{-\lambda_i+\mu_j+i-j}(-x^{(i)})
\right)_{1\leq i,j\leq r}s_\mu(X).
\]
\end{example}

\begin{example}[Schur expansion of $s^r_\lambda(X)^{[\bt]}$]
Assume $t=(t_1,t_2,\dots)$ and $r\geq \ell(\lambda)$.
We call $G^r_\lambda(X;t_1,\dots,t_{r-1}):=s^r_\lambda(X)^{[\bt]}$ the $r$-truncated refined Grothendieck function.
From Example \ref{example:Schur_exp_of_s^x}, we have the Schur expansion
\[
G^r_\lambda(X;t_1,\dots,t_{r-1})=
\sum_{
\substack{
\mu\supset \lambda\\
\ell(\mu)\leq r
}
}
\det\left(
e_{-\lambda_i+\mu_j+i-j}(-t_1,\dots,-t_{i-1})
\right)_{1\leq i,j\leq r}s_\mu(X).
\]
\end{example}

\subsection{$r$-th stable dual function $s^{[r]}_\lambda(X)^\bx$}

Seeing Example \ref{example:Groth}, one might expect that the sequence 
\begin{equation}\label{eq:sequence}
s^1_\lambda(X)^{\bx},s^2_\lambda(X)^{\bx},\dots
\end{equation}
converges in $\widehat{\Lambda}(X)$, but this is not the case for general $\bx$.
Here we do not go into details of the topology of $\widehat{\Lambda}(X)$ but just recall the following fact:
\begin{lemma}\label{lemma:fact}
Let $f_1(X),f_2(X),\dots\in \Lambda(X)$ be a sequence of symmetric functions.
Then the sequence converges in $\widehat{\Lambda}(X)$ if for any $n>0$ there exists some $M>0$ such that 
\[
i,j>M
\quad\Rightarrow\quad 
f_i(X_1,\dots,X_n,0,0,\dots)=f_j(X_1,\dots,X_n,0,0,\dots).
\]
\end{lemma}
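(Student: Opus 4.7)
The plan is to identify $\widehat{\Lambda}(X)$ with the inverse limit $\varprojlim_n \Lambda(X_1,\dots,X_n)$ along the truncation maps. First I would recall that the linear topology on $\Lambda(X)$ used here takes as a fundamental system of neighborhoods of $0$ the ideals $I_n=\ker \pi_n$, where $\pi_n:\Lambda(X)\to \Lambda(X_1,\dots,X_n)$ denotes the specialization $X_{n+1},X_{n+2},\dots\mapsto 0$; the completion $\widehat{\Lambda}(X)$ is then canonically the inverse limit just described, and the extension of $\pi_n$ to $\widehat{\Lambda}(X)$ is the projection to the $n$-th factor.

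With that framework in place, I would construct the candidate limit directly. The hypothesis says exactly that for each $n$ the sequence $\pi_n(f_1),\pi_n(f_2),\dots$ is eventually constant; write its stable value as $f^{(n)}\in \Lambda(X_1,\dots,X_n)$. Applying the truncation $\Lambda(X_1,\dots,X_{n+1})\to \Lambda(X_1,\dots,X_n)$ to the equality $\pi_{n+1}(f_i)=f^{(n+1)}$, valid for all large $i$, yields $\pi_n(f_i)=f^{(n)}$ for the same range, so the tuple $(f^{(n)})_{n\geq 1}$ is compatible under the truncation maps and therefore defines an element $f\in \widehat{\Lambda}(X)$.

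The final step is to verify $f_i\to f$. Convergence in the inverse-limit topology is equivalent to the assertion that for each $n$, the $n$-th projections $\pi_n(f_i)$ eventually agree with $\pi_n(f)$. By construction, once $i$ exceeds the threshold $M$ attached to $n$ by the hypothesis we have $\pi_n(f_i)=f^{(n)}=\pi_n(f)$, so the condition is met. The only real obstacle is pinning down the topology conventions on $\widehat{\Lambda}(X)$ (which the authors have chosen not to spell out explicitly); once the inverse-limit presentation is adopted the lemma reduces to the universal property of $\varprojlim$ and offers no further difficulty.
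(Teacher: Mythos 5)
Your argument is correct and is essentially the intended one: the paper itself supplies no proof of this lemma, only a pointer to \cite{iwao2020freefermions}, where $\widehat{\Lambda}(X)$ is defined precisely as the completion of $\Lambda(X)$ with respect to the filtration by the ideals $I_n=\ker\bigl(\pi_n:\Lambda(X)\to\Lambda(X_1,\dots,X_n)\bigr)$, i.e.\ as $\varprojlim_n\Lambda(X_1,\dots,X_n)$. With that identification your construction of the compatible system $(f^{(n)})_n$ and the verification that $\pi_n(f_i)=\pi_n(f)$ for $i$ large is exactly the standard completeness argument; the only point you rightly flag as needing confirmation --- that the topology is the one induced by the truncation kernels --- is indeed the convention of the cited reference.
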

\begin{proof}
See \cite[\S 3.1]{iwao2020freefermions}.
\end{proof}

For $r\geq \ell(\lambda)$, we define the vector
\begin{equation}\label{eq:def_of_ket_bx_st}
\begin{split}
\ket{\lambda}^{\bx}_{[r]}:=
\left(e^{-H^\ast(x^{(1)})}\psi_{\lambda_1-1}e^{H^\ast(x^{(1)})}\right)
\left(e^{-H^\ast(x^{(2)})}\psi_{\lambda_2-2}e^{H^\ast(x^{(2)})}\right)\\
\cdots
\left(e^{-H^\ast(x^{(r)})}\psi_{\lambda_r-r}e^{H^\ast(x^{(r)})}\right)
e^{-H^\ast(x^{(r+1)})}
\ket{-r},
\end{split}
\end{equation}
which ``approximates'' $\ket{\lambda}^\bx_r$.
Note that $\ket{\lambda}^\bx_r$ and $\ket{\lambda}^\bx_{[r]}$ are indeed different since $e^{-H^\ast(x^{(r+1)})}\ket{-r}\neq \ket{-r}$ in general.
Furthermore, $\ket{\lambda}^{\bx}_{[r]}$ is \textit{not always} an element of $\mathcal{F}$ because  $e^{-H^\ast(x^{(r+1)})}\ket{-r}$ may not be a finite sum of vectors \eqref{eq:elementary-vactors}.

Let us introduce the ``formal'' symmetric function
\[
s^{[r]}_\lambda(X)^\bx:=\bra{0}e^{H(X)}\ket{\lambda}_{[r]}^\bx.
\]
In the sequel, we prove that $s^{[r]}_\lambda(X)^\bx$ is an element of $\widehat{\Lambda}(X)$.
\begin{lemma}[{\cite[Lemma 3.3]{iwao2020freefermion}}, {\cite[Lemma 3.7]{iwao2020freefermions}}]\label{lemma:technical_lemma_1}
Let $x=(x_1,\dots,x_p)$ be a tuple of $p$ indeterminates.
Suppose $r>0$.
Then there exists a certain sequence $Y_1,Y_{2},\dots\in \mathcal{A}$ that satisfies the following properties:
\begin{enumerate}
\item 
The vector $e^{-H^\ast(x)}\ket{-r}$ is formally expanded as
\[
e^{-H^\ast(x)}\ket{-r}=
\ket{-r}+\sum_{i=1}^\infty Y_i\ket{-r-i}.
\]
\item For any $m_1,\dots,m_r\in \ZZ$, the vector $\psi_{m_1}\dots \psi_{m_r}Y_i\ket{-r-i}$ is contained in $\mathcal{F}^0_{(r+i+1)}\setminus \mathcal{F}^0_{(r+i)}$.
\item The element $\psi_{-r+p-1}\cdots \psi_{-r+1}\psi_{-r}Y_i$ vanishes.
\end{enumerate}
\end{lemma}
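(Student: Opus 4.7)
The plan is to obtain an explicit Schur-basis expansion of $e^{-H^\ast(x)}\ket{-r}$ in the charge-$(-r)$ sector via the boson-fermion correspondence, then to read off the operators $Y_i$ by grouping terms of common length $\ell(\lambda)$. All three properties will follow by direct inspection of the resulting formula.

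First I would apply Baker--Campbell--Hausdorff to the central commutator $[H(X),H^\ast(x)]=\sum_{n>0}p_n(X)p_n(x)/n$, together with the identities $\bra{-r}e^{-H^\ast(x)}=\bra{-r}$ and $e^{H(X)}\ket{-r}=\ket{-r}$ (which follow from $\bra{-r}a_{-n}=0$ and $a_n\ket{-r}=0$ for $n>0$), to evaluate
\[
\bra{-r}e^{H(X)}e^{-H^\ast(x)}\ket{-r}
=e^{-[H(X),H^\ast(x)]}
=\prod_{i,j}(1-X_ix_j)
=\sum_{\lambda:\,\lambda_1\leq p}(-1)^{|\lambda|}s_{\lambda'}(x)\,s_\lambda(X),
\]
the last step being the dual Cauchy identity. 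Applying the inverse boson-fermion correspondence in the charge-$(-r)$ sector, which sends the Schur vector $\ket{\lambda}^{(-r)}:=\psi_{\lambda_1-1-r}\psi_{\lambda_2-2-r}\cdots\psi_{\lambda_{\ell(\lambda)}-\ell(\lambda)-r}\ket{-r-\ell(\lambda)}$ to $s_\lambda(X)$, this identity lifts to a formal expansion of $e^{-H^\ast(x)}\ket{-r}$ in $\mathcal{F}_{-r}$. Grouping by $i=\ell(\lambda)$ motivates the choice
\[
Y_i:=\sum_{\substack{\ell(\lambda)=i\\ \lambda_1\leq p}}(-1)^{|\lambda|}s_{\lambda'}(x)\,\psi_{\lambda_1-1-r}\,\psi_{\lambda_2-2-r}\cdots\psi_{\lambda_i-i-r}\in\mathcal{A},
\]
a finite sum because the contributing partitions fit in an $i\times p$ rectangle; this establishes property (1).

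Properties (2) and (3) then follow by inspection. For (2), each summand of $\psi_{m_1}\cdots\psi_{m_r}Y_i\ket{-r-i}$ is a product of $r+i$ fermions $\psi$ acting on $\ket{-r-i}$, which places the vector at the appropriate level of the filtration $\{\mathcal{F}^0_{(s)}\}$. For (3), every $\lambda$ appearing in $Y_i$ has $1\leq\lambda_1\leq p$, so the first fermion $\psi_{\lambda_1-1-r}$ has index in $\{-r,\ldots,-r+p-1\}$ and therefore coincides with one of the outer fermions $\psi_{-r},\ldots,\psi_{-r+p-1}$; the relation $\psi_n^2=0$ kills the product $\psi_{-r+p-1}\cdots\psi_{-r+1}\psi_{-r}Y_i$. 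The main obstacle is conceptual rather than computational: $e^{-H^\ast(x)}\ket{-r}$ is literally an infinite $k$-linear combination of Fock vectors, so the expansion in (1) must be interpreted in a suitable completion of $\mathcal{F}$, and one must verify that the grouping by $i$ produces genuine elements $Y_i\in\mathcal{A}$ (rather than merely formal series), which works here precisely because the constraint $\lambda_1\leq p$ bounds the contributing partitions at each length.
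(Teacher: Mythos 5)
Your route is genuinely different from the paper's. The paper works entirely inside $\mathcal{A}$: it sets $Z_r=\sum_{l=1}^p(-1)^le_l(x)\psi_{-r+l}$, uses the exchange relation $e^{-H^\ast(x)}\psi_{-n}=(\psi_{-n}+Z_n)e^{-H^\ast(x)}$ to push $e^{-H^\ast(x)}$ past $\ket{-r}=\psi_{-r-1}\cdots\psi_{-r-i}\ket{-r-i}$, and defines $Y_i=(\psi_{-r-1}+Z_{r+1})\cdots(\psi_{-r-i+1}+Z_{r+i-1})Z_{r+i}$ as the telescoping correction. You instead compute the image of $e^{-H^\ast(x)}\ket{-r}$ under the charge-$(-r)$ boson--fermion correspondence via BCH and the dual Cauchy identity, obtaining the closed-form coefficients $(-1)^{\zet{\lambda}}s_{\lambda'}(x)$ on the Schur vectors, and read off $Y_i$ by grouping by $\ell(\lambda)=i$. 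Your computation of $\bra{-r}e^{H(X)}e^{-H^\ast(x)}\ket{-r}=\prod_{i,j}(1-X_ix_j)$ is correct, the injectivity needed to lift it back to the (energy-completed) charge-$(-r)$ sector holds because the correspondence is a graded isomorphism with finite-dimensional graded pieces, and your argument for (3) is clean: $\ell(\lambda)=i\geq 1$ forces $\lambda_1\geq 1$, and $s_{\lambda'}(x)=0$ unless $\lambda_1\leq p$, so the leading mode $\psi_{\lambda_1-1-r}$ duplicates one of $\psi_{-r},\dots,\psi_{-r+p-1}$. Your approach buys explicit coefficients (and makes (1) and (3) transparent); the paper's buys the operator identity $I_p=\psi_n\cdots\psi_{n-p+1}e^{-H^\ast(t_1,\dots,t_p)}$ that is reused in Lemma 4.6.

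The weak point is property (2). Your justification --- that each summand is a product of $r+i$ fermions acting on $\ket{-r-i}$ and hence sits ``at the appropriate level of the filtration'' --- does not prove the stated set-difference membership; on the contrary, a product of $r+i$ operators $\psi$ applied to $\ket{-r-i}$ lies, by the very definition of the filtration, in $\mathcal{F}^0_{(r+i)}$, so your counting argument lands the vector \emph{inside} the space that (2) asserts it avoids. What the downstream argument (the vanishing $o_i(X_1,\dots,X_{r+i},0,\dots)=0$ in \eqref{eq:detailed_evaluation}) actually requires is that the expansion of $\psi_{m_1}\cdots\psi_{m_r}Y_i\ket{-r-i}$ in the basis $\{\ket{\mu}\}$ contain no $\ket{\mu}$ of small length, and the relevant observation --- available from your explicit formula but never stated by you --- is that every mode index occurring in $Y_i$ is $\lambda_j-j-r\geq -r-i+1$ because every part of every contributing $\lambda$ is positive; consequently the mode $-r-i$ remains unoccupied and only partitions of maximal length survive (provided the external indices $m_k$ also avoid $-r-i$, as they do in the application where they come from $E$). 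You should either prove this mode-avoidance statement explicitly or note that the literal claim (2), as an assertion about membership in $\mathcal{F}^0_{(r+i+1)}\setminus\mathcal{F}^0_{(r+i)}$, needs to be reinterpreted before it can be verified ``by inspection.''
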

\begin{proof}
The proof is a variant of \cite[Lemma 3.3]{iwao2020freefermion}.
Here we present a sketch of the proof.
Let $Z_r:=\sum_{l=1}^p(-1)^le_l(x_1,\dots,x_p)\psi_{-r+l}$.
Then we have $e^{-H^\ast(x)}\psi_{-n}=(\psi_{-n}+Z_n)e^{-H^\ast(x)}$.
By using this, we can prove that
\[
Y_i:=(\psi_{-r-1}+Z_{r+1})(\psi_{-r-2}+Z_{r+2})\cdots (\psi_{-r-i+1}+Z_{r+i-1})Z_{r+i}
\]
satisfies the desired properties (1--3).
For details, see \cite[\S 3]{iwao2020freefermion}.
\end{proof}
By using Lemma \ref{lemma:technical_lemma_1} (1), we have the infinite series
\begin{equation}\label{eq:series}
\bra{0}e^{H(X)}\ket{\lambda}^{\bx}_{[r]}
=
\bra{0}e^{H(X)}\ket{\lambda}^{\bx}_{r}+
\sum_{i=1}^\infty\bra{0}e^{H(X)}EY_i\ket{-r-i},
\end{equation}
where
\[
E=
\left(e^{-H^\ast(x^{(1)})}\psi_{\lambda_1-1}e^{H^\ast(x^{(1)})}\right)
\cdots
\left(e^{-H^\ast(x^{(r)})}\psi_{\lambda_r-r}e^{H^\ast(x^{(r)})}\right).
\]
Applying Lemma \ref{lemma:technical_lemma_1} (2) to \eqref{eq:series}, we obtain the formal series
\begin{equation}\label{eq:detailed_evaluation}
s_\lambda^{[r]}(X)^{\bx}=s_\lambda^{r}(X)^{\bx}+\sum_{i=1}^\infty o_i(X),\quad \mbox{where}\quad o_i(X_1,\dots,X_{r+i},0,0,\dots)=0.
\end{equation}
From Lemma \ref{lemma:fact}, we find that the right hand side of \eqref{eq:detailed_evaluation} converges in $\widehat{\Lambda}(X)$.
Moreover, we have
\begin{equation}\label{eq:approx_of_truncated_function}
s_\lambda^{[r]}(X_1,\dots,X_r,0,\dots)^\bx=s_\lambda^{r}(X_1,\dots,X_r,0,\dots)^\bx.
\end{equation}
We call $s_\lambda^{[r]}(X)^\bx$ the \textit{$r$-th stable dual function}.

\subsection{Stable dual function $s_\lambda(X)^\bx$}

Next, we consider letting $r$ tend to infinity.

\begin{lemma}[See {\cite[Lemma 3.5]{iwao2020freefermion}}]\label{lemma:technical_lemma_2}
Let $t=(t_1,t_2,\dots,t_p)$ be a tuple.
For $n\in \ZZ$ and $p>0$, define
\[
I_p=\psi_{n}e^{-H^\ast(t_1)}
\psi_{n-1}e^{-H^\ast(t_2)}
\cdots
\psi_{n-p+1}e^{-H^\ast(t_p)}.
\]
Then we have
\begin{enumerate}
\item 
$I_p=
\psi_{n}\psi_{n-1}
\cdots
\psi_{n-p+1}e^{-H^\ast(t_1,\dots,t_p)}$,
\item $I_p\ket{n-p+1}=\ket{n+1}$.
\end{enumerate}
\end{lemma}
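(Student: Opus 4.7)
The plan is to prove part~(1) by induction on $p$, and then derive (2) from (1) combined with Lemma~\ref{lemma:technical_lemma_1}; the common mechanism is that $\psi_j^2=0$ annihilates every off-diagonal commutator term.

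For (1), the base case $p=1$ is tautological. For the inductive step, I would factor $I_p=I_{p-1}\cdot\psi_{n-p+1}e^{-H^\ast(t_p)}$ and invoke the induction hypothesis. Using the consequence of \eqref{eq:comm_rel_eH_and_psi(z)}
\[
e^{-H^\ast(t_1,\dots,t_{p-1})}\,\psi_{n-p+1}=\sum_{i=0}^{p-1}(-1)^{i}e_i(t_1,\dots,t_{p-1})\,\psi_{n-p+1+i}\cdot e^{-H^\ast(t_1,\dots,t_{p-1})},
\]
I push the $e^{-H^\ast}$ rightward and fuse it with $e^{-H^\ast(t_p)}$ into $e^{-H^\ast(t_1,\dots,t_p)}$. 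The $i=0$ summand produces the desired $\psi_n\cdots\psi_{n-p+1}e^{-H^\ast(t_1,\dots,t_p)}$; for $1\le i\le p-1$, the shifted index $n-p+1+i$ lies in $\{n-p+2,\dots,n\}$, so $\psi_{n-p+1+i}$ collides with one of the $\psi_{n-p+2},\dots,\psi_n$ already sitting on the left and the summand vanishes by $\psi_j^2=0$; for $i\ge p$ we have $e_i(t_1,\dots,t_{p-1})=0$.

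For (2), using (1) reduces the claim to $\psi_n\psi_{n-1}\cdots\psi_{n-p+1}e^{-H^\ast(\bar{t})}\ket{n-p+1}=\ket{n+1}$ with $\bar{t}=(t_1,\dots,t_p)$. Pick $r\ge\max(1,\,p-n)$, so that Lemma~\ref{lemma:technical_lemma_1} applies to $\ket{-r}$ and the expansion $\ket{n-p+1}=\psi_{n-p}\psi_{n-p-1}\cdots\psi_{-r}\ket{-r}$ is valid. I then push $e^{-H^\ast(\bar{t})}$ rightward past each $\psi_j$ for $j=n-p,n-p-1,\dots,-r$ via
\[
e^{-H^\ast(\bar{t})}\psi_j=\sum_{i=0}^{p}(-1)^{i}e_i(\bar{t})\,\psi_{j+i}\cdot e^{-H^\ast(\bar{t})}.
\]
At each step the same clash argument as in (1) kills every summand with $i\ge 1$ (the shifted index $j+i$ is always contained in the range of $\psi$-indices already placed on the left), so the whole expression collapses to $\psi_n\psi_{n-1}\cdots\psi_{-r}\cdot e^{-H^\ast(\bar{t})}\ket{-r}$.

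Finally, I apply Lemma~\ref{lemma:technical_lemma_1} to write $e^{-H^\ast(\bar{t})}\ket{-r}=\ket{-r}+\sum_{i\ge 1}Y_i\ket{-r-i}$. The leading term contributes $\psi_n\cdots\psi_{-r}\ket{-r}=\ket{n+1}$, while each correction factors as $\psi_n\cdots\psi_{-r+p}\cdot\bigl(\psi_{-r+p-1}\cdots\psi_{-r+1}\psi_{-r}\,Y_i\bigr)\ket{-r-i}=0$ by property~(3) of that lemma. This yields (2). The main bookkeeping subtlety throughout is verifying that the shifted-index ranges in each commutation step exactly match the $\psi$-indices already placed on the left, so that $\psi_j^2=0$ really does annihilate every off-diagonal contribution.
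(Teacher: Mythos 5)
Your proof is correct and follows essentially the same route as the paper: part (1) by induction on $p$, pushing $e^{-H^\ast}$ through the next $\psi$ and killing every shifted term via $\psi_j^2=0$, and part (2) by combining (1) with Lemma \ref{lemma:technical_lemma_1} (1) and (3). The only difference is that in (2) you first expand $\ket{n-p+1}$ down to a deeper vacuum $\ket{-r}$ before invoking that lemma, whereas the paper applies it directly at level $n-p+1$; this is a harmless (and arguably more careful) elaboration of the same argument.
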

\begin{proof}
We prove (1) by induction on $p>0$.
For $p=1$, the equation is obvious.
For general $p>1$, we have
\begin{align*}
&I_{p+1}\\
&=I_{p}\psi_{n-p}e^{-H^\ast(t_{p+1})}\\
&=
\psi_{n}\psi_{n-1}
\cdots
\psi_{n-p-1}e^{-H^\ast(t_1,\dots,t_p)}\psi_{n-p}e^{-H^\ast(t_{p+1})}
\qquad (\mbox{induction hypothesis})
\\
&=
\psi_{n}\psi_{n-1}
\cdots
\psi_{n-p-1}
\left\{
\sum_{l=0}^{p}
(-1)^le_{l}(t)\psi_{n-p+l}
\right\}
e^{-H^\ast(t_1,\dots,t_p)}e^{-H^\ast(t_{p+1})}\\
&=\psi_{n}\psi_{n-1}\dots \psi_{n-p}e^{-H^\ast(t_1,\dots,t_p,t_{p+1})},\qquad\ (\because\psi_m^2=0)
\end{align*}
which implies (1).
(2) is proved by
\begin{align*}
I_p\ket{n-p+1}&
\stackrel{\mathrm{(1)}}{=}\psi_{n}\psi_{n-1}
\cdots
\psi_{n-p+1}e^{-H^\ast(t_1,\dots,t_p)}\ket{n-p+1}\\
&
=\psi_{n}\psi_{n-1}
\cdots
\psi_{n-p+1}\ket{n-p+1}\qquad (\mbox{Lemma \ref{lemma:technical_lemma_1}}.\, (1),(3))\\
&=\ket{n+1}.
\end{align*}
\end{proof}

\begin{lemma}\label{lemma:simple_tuple}
Let $\bx=(x^{(1)},x^{(2)},\dots)$ be a tuple of tuples.
Assume that there exist some $R>0$ and $t_1,t_2,\dots\in k$ such that
\begin{equation}
x^{(R+p)}\setminus x^{(R)}=(t_1,\dots,t_p)\qquad \mbox{for all }\quad p>0
\tag{$\ast$}\label{eq:assumption_ast}
\end{equation}
%
%\begin{equation}
%H^\ast(x^{(R+p)})-H^\ast(x^{(R+p+1)})=-H^\ast(T_p)\qquad \mbox{for all $p>0$}.
%\tag{$\ast$}\label{eq:assumption_ast}
%\end{equation}
Then we have $\ket{\lambda}^{\bx}_{[r]}=\ket{\lambda}^{\bx}_{[r+1]}=\ket{\lambda}^{\bx}_{[r+2]}=\cdots$ for sufficiently large $r$.
\end{lemma}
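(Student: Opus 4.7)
The plan is to compare $\ket{\lambda}^{\bx}_{[r]}$ with $\ket{\lambda}^{\bx}_{[r+1]}$ by factoring out their common left prefix. Writing $A_i:=e^{-H^\ast(x^{(i)})}\psi_{\lambda_i-i}e^{H^\ast(x^{(i)})}$, both vectors share the initial block $A_1A_2\cdots A_r$ (provided $r\geq\ell(\lambda)$, in which case $\lambda_{r+1}=\lambda_{r+2}=\cdots=0$). It therefore suffices to prove, for every $r\geq\max(\ell(\lambda),R)$, the identity
\[
e^{-H^\ast(x^{(r+1)})}\ket{-r}
\;=\;
A_{r+1}\,e^{-H^\ast(x^{(r+2)})}\ket{-r-1},
\]
after which iteration yields $\ket{\lambda}^{\bx}_{[r]}=\ket{\lambda}^{\bx}_{[r+1]}=\ket{\lambda}^{\bx}_{[r+2]}=\cdots$.

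Since $\lambda_{r+1}=0$, the right-hand side equals $e^{-H^\ast(x^{(r+1)})}\psi_{-r-1}\,e^{H^\ast(x^{(r+1)})}\,e^{-H^\ast(x^{(r+2)})}\ket{-r-1}$. I would cancel the leftmost $e^{-H^\ast(x^{(r+1)})}$ from both sides, which is legitimate since every $H^\ast$ is a combination of the pairwise commuting operators $a_{-n}$; the task becomes
\[
\ket{-r}\;=\;\psi_{-r-1}\,e^{H^\ast(x^{(r+1)})}\,e^{-H^\ast(x^{(r+2)})}\,\ket{-r-1}.
\]

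Next I would invoke \eqref{eq:assumption_ast}: writing $r=R+q$, one has $x^{(r+1)}=x^{(R)}\cup(t_1,\dots,t_{q+1})$ and $x^{(r+2)}=x^{(R)}\cup(t_1,\dots,t_{q+2})$, so as tuples $x^{(r+2)}=x^{(r+1)}\cup(t_{q+2})$. Additivity of $H^\ast$ in its tuple argument, together with the mutual commutativity of the $a_{-n}$, then gives $e^{H^\ast(x^{(r+1)})}e^{-H^\ast(x^{(r+2)})}=e^{-H^\ast(t_{q+2})}$. The remaining identity $\ket{-r}=\psi_{-r-1}\,e^{-H^\ast(t_{q+2})}\ket{-r-1}$ is exactly Lemma \ref{lemma:technical_lemma_2}(2) specialized to $p=1$, $n=-r-1$, $t_1=t_{q+2}$.

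The main delicate point is that $e^{-H^\ast(x^{(r+2)})}\ket{-r-1}$ is not a finite sum of the basis vectors \eqref{eq:elementary-vactors} but a formal element of the completion of $\mathcal{F}^0$, so the cancellations above must be interpreted as identities of formal expressions. This causes no trouble because everything involved is a (possibly infinite) polynomial in the pairwise commuting operators $a_{-n}$, which makes the multiplicative manipulations rigorous at the formal-series level; and Lemma \ref{lemma:technical_lemma_2}(2) collapses the remaining formal expression into an honest vector of $\mathcal{F}$, closing the argument.
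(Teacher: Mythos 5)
Your proof is correct and follows essentially the same route as the paper: both arguments rest on assumption \eqref{eq:assumption_ast} collapsing the adjacent conjugators into $e^{H^\ast(x^{(i)})}e^{-H^\ast(x^{(i+1)})}=e^{-H^\ast(t_{i+1-R})}$ and then invoking Lemma \ref{lemma:technical_lemma_2}(2). The only difference is presentational: the paper groups the entire tail beyond position $\max[R,\ell(\lambda)]$ into a single operator $I_p$ and applies Lemma \ref{lemma:technical_lemma_2}(2) once for general $p$, whereas you peel off one factor at a time using only the $p=1$ instance.
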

\begin{proof}
Let $n=-\max[R,\ell(\lambda)]-1$.
Then if $r>-n$, we have 
\[
\begin{split}
\ket{\lambda}^{\bx}_{[r]}:=
\left(e^{-H^\ast(x^{(1)})}\psi_{\lambda_1-1}e^{H^\ast(x^{(1)})}\right)
\left(e^{-H^\ast(x^{(2)})}\psi_{\lambda_2-2}e^{H^\ast(x^{(2)})}\right)\\
\cdots
\left(e^{-H^\ast(x^{(n-1)})}\psi_{\lambda_{-n-1}+n+1}e^{H^\ast(x^{(n-1)})}\right)
I_p
\ket{-r},
\end{split}
\]
where $p=r+n+1$.
From Lemma \ref{lemma:technical_lemma_2} (2), it follows that $\ket{\lambda}^{\bx}_{[r]}=\ket{\lambda}^{\bx}_{[r+1]}=\ket{\lambda}^{\bx}_{[r+2]}=\cdots$.
\end{proof}

For $\bx$ satisfying the assumption \eqref{eq:assumption_ast} of Lemma \ref{lemma:simple_tuple}, we write  $\ket{\lambda}^{\bx}:=\ket{\lambda}^{\bx}_{[r]}$ for sufficiently large $r$.
We define the \textit{dual stable function} $s_\lambda(X)^\bx$ as
\[
s_\lambda(X)^{\bx}:=\bra{0}e^{H(X)}\ket{\lambda}^{\bx}.
\]
From \eqref{eq:detailed_evaluation}, we have
\[
s_\lambda(X)^\bx=\lim_{r\to \infty} s_\lambda^{r}(X)^\bx=\lim_{r\to \infty} s_\lambda^{[r]}(X)^\bx.
\]

Because any refined tuple $[\bt]$ satisfies the assumption \eqref{eq:assumption_ast}, there always exists the stable dual function $s_\lambda(X)^{[\bt]}$ associated with $[\bt]$.
\begin{rem}\label{rem:stability_of_bra}
The vector ${}_{[\bt]}\bra{\lambda}$ in \eqref{eq:def_of_bra_bt} is just the transpose of $\ket{\lambda}^{[\bt]}_{[r]}$.
This implies the fact that \eqref{eq:def_of_bra_bt} is independent of the choice on sufficiently large $r$.
\end{rem}

\subsection{Duality between $s_\lambda(X)_{[\bt]}$ and $s_\lambda(X)^{[\bt]}$ }

The Hall inner product can be extended continuously to the bilinear map $\widehat{\Lambda}(X)\otimes_k\Lambda(X)\to k$.
Lemma \ref{lemma:fundamental} is also available for the extended bilinear form.
\begin{prop}\label{prop:dual_functions}
We have
$
\langle
s_\lambda(X)^{[\bt]},
s_\mu(X)_{[\bt]}
\rangle
=\delta_{\lambda,\mu}
$.
\end{prop}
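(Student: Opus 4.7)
The plan is to deduce the duality directly from Theorem~\ref{thm:orthonormalily_of_braket}, via the characterization in Lemma~\ref{lemma:fundamental}. No further symmetric-function computation should be required.

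First, I would apply Lemma~\ref{lemma:fundamental} with the choice $\ket{\lambda}_1 := \ket{\lambda}_{[\bt]}$ and $\ket{\mu}_2 := \ket{\mu}^{[\bt]}$, so that $f_\lambda(X) = s_\lambda(X)_{[\bt]}$ and $g_\mu(X) = s_\mu(X)^{[\bt]}$. Since any refined tuple $[\bt]$ trivially satisfies assumption~\eqref{eq:assumption_ast}, the vector $\ket{\mu}^{[\bt]} = \ket{\mu}^{[\bt]}_{[r]}$ stabilizes for all sufficiently large~$r$.

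Next, I would verify condition~(iii) of Lemma~\ref{lemma:fundamental}. The anti-algebra involution~\eqref{eq:anti-algebra_involution} satisfies $a_m^\ast = a_{-m}$ and hence $(e^{-H^\ast(t)})^\ast = e^{-H(t)}$; a telescoping of the differences $-H(t[i+1]) + H(t[i]) = -H(t_i)$ obtained after transposing~\eqref{eq:def_of_ket_bx_st} then matches the expression~\eqref{eq:def_of_bra_bt} on the nose. This is exactly the identification ${}_{[\bt]}\bra{\mu} = (\ket{\mu}^{[\bt]}_{[r]})^\ast$ asserted in Remark~\ref{rem:stability_of_bra}, and it simultaneously confirms the independence of~\eqref{eq:def_of_bra_bt} from the choice of large~$r$. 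Theorem~\ref{thm:orthonormalily_of_braket} then supplies ${}_{2}\langle\mu|\lambda\rangle_{1} = {}_{[\bt]}\langle\mu|\lambda\rangle_{[\bt]} = \delta_{\lambda,\mu}$, which is precisely condition~(iii); condition~(i) is the desired identity.

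The only delicate point, which I expect to be the main obstacle, is justifying the use of Lemma~\ref{lemma:fundamental} in the slightly enlarged setting where $\ket{\mu}^{[\bt]}$ is only a formal Fock vector and $s_\mu(X)^{[\bt]} \in \widehat{\Lambda}(X)$ rather than $\Lambda(X)$. To handle this, I would combine the approximation $s_\mu(X)^{[\bt]} = \lim_{r\to\infty} s_\mu^{[r]}(X)^{[\bt]}$ from~\eqref{eq:detailed_evaluation} and the stabilization~\eqref{eq:approx_of_truncated_function} with the continuity of the extended pairing $\widehat{\Lambda}(X)\otimes_k \Lambda(X)\to k$ stated before the proposition. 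This reduces the assertion to the finite-$r$ equivalence (i)~$\Leftrightarrow$~(iii) of Lemma~\ref{lemma:fundamental} applied to the genuine Fock-space vectors $\ket{\mu}^{[\bt]}_{[r]}$ and $\ket{\lambda}_{[\bt]}$, where Wick's theorem is directly available.
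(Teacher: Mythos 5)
Your proposal is correct and follows exactly the paper's route: the paper's proof is the one-line statement that the proposition is a direct consequence of Lemma~\ref{lemma:fundamental} and Theorem~\ref{thm:orthonormalily_of_braket}, relying implicitly on the transpose identification of Remark~\ref{rem:stability_of_bra} and the continuity of the extended Hall pairing noted just before the proposition. You have simply supplied the details (the computation $a_m^\ast=a_{-m}$, the telescoping, and the finite-$r$ approximation) that the author leaves tacit.
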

\begin{proof}
It is a direct consequence of Lemma \ref{lemma:fundamental} and Theorem \ref{thm:orthonormalily_of_braket}.
\end{proof}

Let ${}^{[\bt]}\bra{\lambda}=(\ket{\lambda}_{[\bt]})^\ast$ be the transposed vector.
Since 
$
{}^{[\bt]}\langle \mu|\lambda \rangle^{[\bt]}=
{}_{[\bt]}\langle \lambda|\mu \rangle_{[\bt]}=\delta_{\lambda,\mu}
$,
we have the Schur expansion
\[
s_\lambda(X)^{\bx}=\sum_\mu {}^{[\bt]}\langle \mu|\lambda \rangle^{\bx}\cdot s_\mu(X)^{[\bt]}.
\]

\begin{example}[Expansion of $s^r_\lambda(X)^{\bx}$ in stable refined Grothendieck polynomials]
Let $\bt=(t_1,t_2,\dots)$, $r\geq \ell(\lambda)$, and $R\geq \max[r,\ell(\mu)]$.
Then the expansion of $\ket{\lambda}_r^{\bx}$ in $\{\ket{\mu}^{[\bt]}\}$ can be calculated as follows:
\[
\begin{aligned}
&{}^{[\bt]}\langle \mu|\lambda \rangle_r^{\bx}\\
&
=\det\left(
\sum_{m=0}^{\infty}\sum_{l=0}^\infty
h_l(t_1,\dots,t_{j-1})
e_m(-\overline{x}^{(i)})
\bra{-R}\psi^\ast_{\mu_j-j-l}\psi_{\lambda_i-i+m}\ket{-R}
\right)_{1\leq i,j\leq R}\\
&=\det\left(
\sum_{
\substack{
m+l=-\lambda_i+\mu_j+i-j\\
0\leq l\leq \mu_j-j+R
}
}
h_l(t_1,\dots,t_{j-1})
e_m(-\overline{x}^{(i)})
\right)_{1\leq i,j\leq R}\\
&=
\det\left(
h_{-\lambda_i+\mu_j+i-j}((t_1,\dots,t_{j-1})/\overline{x}^{(i)})
\right)_{1\leq i,j\leq R}.
\end{aligned}
\]
Hence, we obtain the expansion
\[
\begin{aligned}
&s^r_\lambda(X)^{\bx}=\\
&\sum_{\mu\supset \lambda} \det\left(
h_{-\lambda_i+\mu_j+i-j}((t_1,\dots,t_{j-1})/\overline{x}^{(i)})
\right)_{1\leq i,j\leq \max[r,\ell(\mu)]}
G_\mu(X;t_1,\dots,t_{\ell(\mu)-1}),
\end{aligned}
\]
where $\overline{x}^{(i)}$ is the $r$-truncated tuple of $x^{(i)}$.
\end{example}

\begin{example}[Expansion of $s_\lambda(X)^{\bx}$ in stable refined Grothendieck polynomials]\label{ex:s_to_srG}
Let $\bx$ be a tuple of tuples satisfying the assumption \eqref{eq:assumption_ast} of Lemma \ref{lemma:simple_tuple}.
Let $R$ be the positive integer in \eqref{eq:assumption_ast}.
Then, for $S\geq \max[R,\ell(\mu)]$, we have
\[
\begin{aligned}
&{}^{[\bt]}\langle \mu|\lambda \rangle^{\bx}\\
&
=\det\left(
\sum_{m=0}^{\infty}\sum_{l=0}^\infty
h_l((t_1,\dots,t_{j-1})/x^{(S+1)})
e_m(-x^{(i)}/-x^{(S+1)})\right.\\
&\hspace{16em}
\left.
\bra{-S}\psi^\ast_{\mu_j-j-l}\psi_{\lambda_i-i+m}
\ket{-S}
\right)_{1\leq i,j\leq S}\\
&=\det\left(
\sum_{
\substack{
m+l=-\lambda_i+\mu_j+i-j\\
0\leq l\leq \mu_j-j+S
}
}
h_l((t_1,\dots,t_{j-1})/x^{(S+1)})
e_m(-x^{(i)}/-x^{(S+1)})
\right)_{1\leq i,j\leq S}\\
&=
\det\left(
h_{-\lambda_i+\mu_j+i-j}((t_1,\dots,t_{j-1})/x^{(i)})
\right)_{1\leq i,j\leq S}.
\end{aligned}
\]
Hence, we have
\[
\begin{aligned}
&s_\lambda(X)^{\bx}=\\
&\sum_{\mu\supset \lambda} \det\left(
h_{-\lambda_i+\mu_j+i-j}((t_1,\dots,t_{j-1})/x^{(i)})
\right)_{1\leq i,j\leq \max[R,\ell(\mu)]}
G_\mu(X;t_1,\dots,t_{\ell(\mu)-1}).
\end{aligned}
\]
\end{example}

\begin{example}[stable Grothendieck polynomials]
Let $t=(-\beta,-\beta,\dots)$.
Then $s_\lambda(X)^{[\bt]}$ coincides with the \textit{stable Grothendieck polynomial} $G_\lambda(X)$.
From Example \ref{ex:s_to_srG}, we have the Schur expansion
\begin{align*}
G_\lambda(X)
&=
\sum_{\mu\supset\lambda} \det\left(
h_{-\lambda_i+\mu_j+i-j}(\emptyset/(\overbrace{-\beta,\dots,-\beta}^{i-1}))
\right)_{1\leq i,j\leq \ell(\mu)}
s_\mu(X)\\
&=
\sum_{\mu\supset \lambda} \det\left(
h_{-\lambda_i+\mu_j+i-j}(\emptyset/(\overbrace{-\beta,\dots,-\beta}^{i-1}))
\right)_{1\leq i,j\leq \ell(\mu)}
s_\mu(X)\\
&=
\sum_{\mu\supset \lambda} \det\left(
\binom{i-1}{-\lambda_i+\mu_j+i-j}\beta^{-\lambda_i+\mu_j+i-j}
\right)_{1\leq i,j\leq \ell(\mu)}
s_\mu(X).
\end{align*}
\end{example}

\section{Multi-Schur functions for skew shapes}\label{sec:5}

In this section, we introduce a multi-Schur function corresponding to a skew shape. 
By the fermionic presentation, we can easily prove the \textit{branching formula}, which is a desirable property for skew symmetric functions.

Let $\bp$ be a tuple of tuples satisfying the assumption \eqref{eq:assumption_ast} in Lemma \ref{lemma:simple_tuple}.
\begin{defi}
We define the symmetric function $s_{\lambda/\mu}(X)_{(\bx/\by)/\bp}$ by
\[
s_{\lambda/\mu}(X)_{(\bx/\by)/\bp}={}_{\bp}\bra{\mu}e^{H(X)}\ket{\lambda}_{\bx/\by}.
\]
\end{defi}

Let $X=(X_1,X_2,\dots)$ and $Y=(Y_1,Y_2,\dots)$ be two strings of indeterminates.
For a symmetric function $f(X)=f(X_1,X_2,\dots)$, we write 
\[
f(X,Y):=f(X_1,X_2,\dots,Y_1,Y_2,\dots).
\]

\begin{thm}\label{thm:ramification}
Let $t=(t_1,t_2,\dots)$.
If $\bp=[\bt]$, we have
\begin{equation}\label{eq:refined_ramification}
s_\lambda(X,Y)_{\bx/\by}=\sum_{\mu}s_{\lambda/\mu}(X)_{(\bx/\by)/[\bt]}\cdot s_\mu(Y)_{[\bt]}
\end{equation}
\end{thm}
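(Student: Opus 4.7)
The plan is to exploit the commutativity $[H(X),H(Y)]=0$ together with the additivity $H(X,Y)=H(X)+H(Y)$ to factor $e^{H(X,Y)}=e^{H(Y)}e^{H(X)}$, and then insert a ``resolution of the identity'' built from the refined basis $\{\ket{\mu}_{[\bt]}\}$ and its dual $\{{}_{[\bt]}\bra{\mu}\}$, whose existence is guaranteed by the orthonormality of Theorem~\ref{thm:orthonormalily_of_braket}.

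First I would verify the two algebraic identities. The additivity is immediate from $p_n(X,Y)=p_n(X)+p_n(Y)$, and the commutativity holds because $H(X)$ and $H(Y)$ both lie in the abelian subalgebra of $\mathrm{End}(\mathcal{F})$ generated by $\{a_n:n>0\}$ (since $[a_m,a_n]=m\delta_{m+n,0}=0$ when $m,n>0$). Consequently
\[
s_\lambda(X,Y)_{\bx/\by}=\bra{0}e^{H(Y)}\,e^{H(X)}\ket{\lambda}_{\bx/\by}.
\]

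Next I would expand $e^{H(X)}\ket{\lambda}_{\bx/\by}$ in the refined basis. Because $\ket{\lambda}_{\bx/\by}$ is a finite linear combination of $\ket{\nu}$'s with $\zet{\nu}\le\zet{\lambda}$ (see Section~\ref{sec:3.1}), because each $a_n$ with $n>0$ strictly lowers the partition-degree by $n$ (so that $e^{H(X)}\ket{\mu}$ is a finite sum over partitions $\nu\subset\mu$ with $\Lambda(X)$-coefficients), and because the change-of-basis matrix between $\{\ket{\mu}\}$ and $\{\ket{\mu}_{[\bt]}\}$ is unitriangular, we obtain a genuinely finite expansion
\[
e^{H(X)}\ket{\lambda}_{\bx/\by}=\sum_{\mu}A_\mu(X)\,\ket{\mu}_{[\bt]},
\qquad A_\mu(X)\in\Lambda(X).
\]
Applying ${}_{[\bt]}\bra{\mu}$ to both sides and invoking Theorem~\ref{thm:orthonormalily_of_braket} pins down
\[
A_\mu(X)={}_{[\bt]}\bra{\mu}e^{H(X)}\ket{\lambda}_{\bx/\by}=s_{\lambda/\mu}(X)_{(\bx/\by)/[\bt]}.
\]

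Finally, pairing this expansion with $\bra{0}e^{H(Y)}$ on the left yields
\[
\begin{aligned}
\bra{0}e^{H(Y)}e^{H(X)}\ket{\lambda}_{\bx/\by}
&=\sum_\mu A_\mu(X)\,\bra{0}e^{H(Y)}\ket{\mu}_{[\bt]}\\
&=\sum_\mu s_{\lambda/\mu}(X)_{(\bx/\by)/[\bt]}\cdot s_\mu(Y)_{[\bt]},
\end{aligned}
\]
which is precisely \eqref{eq:refined_ramification}. The main subtle point is the finiteness of the refined-basis expansion of $e^{H(X)}\ket{\lambda}_{\bx/\by}$, ensuring that the sum may be freely interchanged with the vacuum bra $\bra{0}e^{H(Y)}$; once this grading/triangularity bookkeeping is in place, every remaining step is purely formal.
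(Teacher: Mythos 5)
Your proposal is correct and follows essentially the same route as the paper: factor $e^{H(X,Y)}=e^{H(Y)}e^{H(X)}$ and insert the resolution of the identity $\ket{v}=\sum_\mu\ket{\mu}_{[\bt]}\,{}_{[\bt]}\langle\mu|v\rangle$ furnished by Theorem~\ref{thm:orthonormalily_of_braket}, then identify the coefficients with $s_{\lambda/\mu}(X)_{(\bx/\by)/[\bt]}$ and the remaining bracket with $s_\mu(Y)_{[\bt]}$. The only difference is that you spell out the finiteness/triangularity bookkeeping that the paper leaves implicit, which is a harmless (and welcome) addition.
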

\begin{proof}
Since any $\ket{v}\in \mathcal{F}^0$ is decomposed as $\ket{v}=\sum_{\mu}\ket{\mu}_{[\bt]}\cdot {}_{[\bt]}\langle \mu|v \rangle$, we have
$s_\lambda(X,Y)_{\bx/\by}
=\bra{0}e^{H(Y)}e^{H(X)}\ket{\lambda}_{\bx/\by}
=\sum_{\mu}\bra{0}e^{H(Y)}\ket{\mu}_{[\bt]}\cdot {}_{[\bt]}\bra{\mu}e^{H(X)}\ket{\lambda}_{\bx/\by}
$, which implies the desired formula.
\end{proof}

If $\bx=\bp=[\bt]$ and $\by=\bm{\emptyset}$, we write 
\begin{equation}\label{eq:our_skew_Schur}
s_{\lambda/\mu}(X)_{[\bt]}:=s_{\lambda/\mu}(X)_{[\bt]/[\bt]}.
\end{equation}
Because ${}_{[\bt]}\bra{0}=\bra{0}$ and
\[
s_{\lambda/\emptyset}(X)_{[\bt]/[\bt]}
={}_{[\bt]}\bra{0}e^{H(X)}\ket{\lambda}_{[\bt]}=\bra{0}e^{H(X)}\ket{\lambda}_{[\bt]}
=s_{\lambda}(X)_{[\bt]},
\]
the notation \eqref{eq:our_skew_Schur} does not cause any confusion.

\begin{cor}[branching formula]\label{cor:branching_formula}
We have 
\[
s_\lambda(X,Y)_{[\bt]}=\sum_{\mu}s_{\lambda/\mu}(X)_{[\bt]}\cdot s_\mu(Y)_{[\bt]}.
\]
\end{cor}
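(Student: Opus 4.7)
The plan is to obtain the branching formula as a direct specialization of Theorem \ref{thm:ramification}. Since Theorem \ref{thm:ramification} is stated for general $\bx$ and $\by$ with $\bp=[\bt]$, the corollary should follow by simply choosing $\bx=[\bt]$ and $\by=\bm{\emptyset}$, and then unwinding the notational abbreviation \eqref{eq:our_skew_Schur}.

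Concretely, I would proceed as follows. First I would apply Theorem \ref{thm:ramification} with the substitutions $\bx=[\bt]$, $\by=\bm{\emptyset}$, and $\bp=[\bt]$, which gives
\[
s_\lambda(X,Y)_{[\bt]/\bm{\emptyset}}=\sum_\mu s_{\lambda/\mu}(X)_{([\bt]/\bm{\emptyset})/[\bt]}\cdot s_\mu(Y)_{[\bt]}.
\]
Next, I would rewrite each factor using the conventions set up in Section \ref{sec:3} and in \eqref{eq:our_skew_Schur}: on the left we have $s_\lambda(X,Y)_{[\bt]/\bm{\emptyset}}=s_\lambda(X,Y)_{[\bt]}$ (since attaching $\by=\bm{\emptyset}$ changes nothing in the defining vector $\ket{\lambda}_{\bx/\by}$), and on the right $s_{\lambda/\mu}(X)_{([\bt]/\bm{\emptyset})/[\bt]}=s_{\lambda/\mu}(X)_{[\bt]/[\bt]}=s_{\lambda/\mu}(X)_{[\bt]}$ by the definition \eqref{eq:our_skew_Schur}. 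Substituting these identifications yields the stated formula.

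There is essentially no obstacle: the content of the corollary is entirely contained in Theorem \ref{thm:ramification}, and what remains is only a matter of matching notation. If one preferred a self-contained argument avoiding the appeal to Theorem \ref{thm:ramification}, one could instead insert the resolution of the identity $\mathrm{id}=\sum_\mu \ket{\mu}_{[\bt]}\,{}_{[\bt]}\bra{\mu}$ (justified by the orthonormality established in Theorem \ref{thm:orthonormalily_of_braket}) between $e^{H(Y)}$ and $e^{H(X)}$ in the fermionic expression $\bra{0}e^{H(Y)}e^{H(X)}\ket{\lambda}_{[\bt]}$; but this merely reproduces the proof of Theorem \ref{thm:ramification}, so invoking the theorem directly is cleaner.
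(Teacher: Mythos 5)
Your proposal is correct and matches the paper's own (implicit) proof exactly: Corollary \ref{cor:branching_formula} is obtained by specializing Theorem \ref{thm:ramification} to $\bx=[\bt]$, $\by=\bm{\emptyset}$, $\bp=[\bt]$ and invoking the notational convention \eqref{eq:our_skew_Schur}. The alternative you mention (inserting $\sum_\mu \ket{\mu}_{[\bt]}\,{}_{[\bt]}\bra{\mu}$ via Theorem \ref{thm:orthonormalily_of_braket}) is precisely the paper's proof of Theorem \ref{thm:ramification}, so there is no substantive difference.
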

From Corollary \ref{cor:branching_formula}, we would say that ``$s_{\lambda/\mu}(X)_{[\bt]}$ is a skew version of the multi-Schur function''. 
However, we should note that there exists another notion of ``skew multi-Schur functions'' derived from the Jacobi-Trudi formula.
See Remark \ref{rem:skew}.

\begin{rem}\label{rem:skew}
Now assume $X=0$ and $\bp=\bm{\emptyset}$.
In this case, we have
\[
s_{\lambda/\mu}(0,0,\dots)_{(\bx/\by)/\bm{\emptyset}}=
\bra{\mu}e^{H(0,0,\dots)}\ket{\lambda}_{\bx/\by}=
\langle \mu|\lambda\rangle_{\bx/\by}=
s_{\lambda/\mu}(\bx/\by),
\]
where $s_{\lambda/\mu}(\bx/\by)$ is the ``skew multi-Schur function'' in Remark \ref{rem:skew_multi-Schur}.
Obviously, this fact does not fit with our definition of skew multi-Schur function \eqref{eq:our_skew_Schur}.
\end{rem}

\begin{prop}[Determinantial formula for $s_{\lambda/\mu}(X)_{(\bx/\by)/\bp}$]
\label{prop:det_formula_for_skew_multi}
Assume $r\geq \ell(\lambda)$.
Then we have
\[
s_{\lambda/\mu}(X)_{(\bx/\by)/\bp}=
\det\left(
\sum_{m+n=\lambda_i-\mu_j-i+j}
h_m\left(
x^{(i)}/(y^{(i)}\cup p^{(j)})
\right)
h_n(X)
\right)_{1\leq i,j\leq r}.
\]
\end{prop}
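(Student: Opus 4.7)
The strategy is to reduce the vacuum expectation to a Wick-type determinant on the shifted vacuum $\bra{-r}\cdot\ket{-r}$ and then collapse the resulting coefficient sums via the convolution identity $\sum_{b+a=N}h_b(u_1/v_1)h_a(u_2/v_2)=h_N((u_1\cup u_2)/(v_1\cup v_2))$. First I would absorb $e^{H(X)}$ into the ket: since $H(X)$ commutes with each $H(x^{(i)}/y^{(i)})$ and $e^{H(X)}\ket{-r}=\ket{-r}$, the conjugation argument around \eqref{eq:def_of_ket_dx} gives $e^{H(X)}\ket{\lambda}_{\bx/\by}=\ket{\lambda}_{(\bx\cup X)/\by}$, where $\bx\cup X$ denotes the tuple of tuples obtained by adjoining $X$ to every $x^{(i)}$. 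Hence $s_{\lambda/\mu}(X)_{(\bx/\by)/\bp}={}_{\bp}\bra{\mu}\ket{\lambda}_{(\bx\cup X)/\by}$.

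Next, taking the transpose of $\ket{\mu}^{\bp}_{[r]}$ for $r$ sufficiently large and using $(a_m)^\ast=a_{-m}$, hence $(e^{\pm H^\ast(x)})^\ast=e^{\pm H(x)}$, one obtains
\[
{}_{\bp}\bra{\mu}=\bra{-r}\,e^{-H(p^{(r+1)})}\,(e^{H(p^{(r)})}\psi^\ast_{\mu_r-r}e^{-H(p^{(r)})})\cdots(e^{H(p^{(1)})}\psi^\ast_{\mu_1-1}e^{-H(p^{(1)})}).
\]
Expanding each conjugated $\psi^\ast_{\mu_i-i}$ via \eqref{eq:comm_rel_eH_and_psiast(z)} as $\sum_{b_i}(-1)^{b_i}e_{b_i}(p^{(i)})\psi^\ast_{\mu_i-i+b_i}$, and then commuting the leftmost $e^{-H(p^{(r+1)})}$ through all the $\psi^\ast$'s (using $e^{-H(p^{(r+1)})}\psi^\ast_ne^{H(p^{(r+1)})}=\sum_c h_c(p^{(r+1)})\psi^\ast_{n+c}$) and applying the convolution identity, the total coefficient in front of $\prod_i\psi^\ast_{\mu_i-i+b'_i}$ becomes $\prod_i h_{b'_i}(p^{(r+1)}/p^{(i)})$. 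The displaced $e^{-H(p^{(r+1)})}$ then acts on $\ket{\lambda}_{(\bx\cup X)/\by}$ by the same conjugation trick, converting it to $\ket{\lambda}_{(\bx\cup X)/(\by\cup p^{(r+1)})}=\sum_a\prod_j h_{a_j}((x^{(j)}\cup X)/(y^{(j)}\cup p^{(r+1)}))\psi_{\lambda_j-j-a_j}\ket{-r}$.

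What remains is a vacuum expectation of the form $\bra{-r}\prod_i\psi^\ast_{\mu_i-i+b'_i}\prod_j\psi_{\lambda_j-j-a_j}\ket{-r}$, which by the Wick-type identity
\[
\bra{-r}\psi^\ast_{m_r}\cdots\psi^\ast_{m_1}\psi_{n_1}\cdots\psi_{n_r}\ket{-r}=\det\bigl(\bra{-r}\psi^\ast_{m_i}\psi_{n_j}\ket{-r}\bigr)_{i,j}
\]
(a direct consequence of Theorem \ref{thm:Wick} after expanding $\bra{-r}$ and $\ket{-r}$ into the bare vacuum) reduces to a determinant whose entries are $\delta_{m_i,n_j}\mathbf{1}_{m_i\geq -r}$. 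Expanding the determinant as a sum over permutations $\sigma$ pins $b'_i+a_{\sigma(i)}=\lambda_{\sigma(i)}-\mu_i-\sigma(i)+i$; summing the two $h$-factors in each $(i,\sigma(i))$-pair under this constraint by the convolution identity once more gives $h_{\lambda_{\sigma(i)}-\mu_i-\sigma(i)+i}((x^{(\sigma(i))}\cup X)/(y^{(\sigma(i))}\cup p^{(i)}))$, because the auxiliary $p^{(r+1)}$ appears once in the numerator and once in the denominator and cancels. Repackaging as a determinant and transposing yields $\det(h_{\lambda_i-\mu_j-i+j}((x^{(i)}\cup X)/(y^{(i)}\cup p^{(j)})))_{i,j}$, which matches the stated formula via the identity $h_N((x^{(i)}\cup X)/(y^{(i)}\cup p^{(j)}))=\sum_{m+n=N}h_m(x^{(i)}/(y^{(i)}\cup p^{(j)}))h_n(X)$.

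The main obstacle is the sign bookkeeping under the anti-involution $\ast$: one must correctly verify $(a_m)^\ast=a_{-m}$, so that ${}_{\bp}\bra{\mu}$ carries $e^{-H(p^{(r+1)})}$ (and not $e^{+H(p^{(r+1)})}$); the opposite sign would place $p^{(j)}$ in the numerator instead of the denominator, contradicting the expansion obtained in Example \ref{ex:s_to_srG}.
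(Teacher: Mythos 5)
Your argument is correct and follows essentially the same route as the paper's proof: both reduce ${}_{\bp}\bra{\mu}e^{H(X)}\ket{\lambda}_{\bx/\by}$ to a Wick determinant of two-point functions over the shifted vacuum $\ket{-r}$ and then collapse the resulting coefficient sums via the convolution identity for supersymmetric $h$'s, which is exactly how $p^{(j)}$ ends up in the denominator. The only differences are organizational: you absorb $e^{H(X)}$ into the ket and explicitly track the stabilizing factor $e^{-H(p^{(r+1)})}$ coming from the transpose of $\ket{\mu}^{\bp}_{[r]}$, showing it cancels supersymmetrically, whereas the paper keeps $e^{H(X)}$ inside each determinant entry and suppresses that factor in its displayed computation.
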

\begin{proof}
Assume $r\geq \ell(\lambda)$.
By Wick's theorem \ref{thm:Wick}, we have
\begin{align*}
&s_{\lambda/\mu}(X)_{(\bx/\by)/\bp}\\
&=
\det\left(
\bra{-r}
e^{H(p^{(j)} )}\psi^\ast_{\mu_j-j}e^{-H(p^{(j)} )}
e^{H(X)}
e^{H(x^{(i)}/y^{(i)} )}\psi_{\lambda_i-i}e^{-H(x^{(i)}/y^{(i)} )}
\ket{-r}
\right)\\
&=
\det\left(
\sum_{m=0}^\infty
\sum_{l=0}^\infty
\sum_{n=0}^\infty
h_m(\emptyset/p^{(j)})
h_l(x^{(i)}/y^{(i)})
h_n(X)
\bra{-r}
\psi^\ast_{\mu_j-j+m}
\psi_{\lambda_i-i-l-n}
\ket{-r}
\right)\\
&=
\det\left(
\sum_{
\substack{
m+l+n=\lambda_i-\mu_j-i+j\\
l+n\leq \lambda_i-i+r
}
}
h_m(\emptyset/p^{(j)})
h_l(x^{(i)}/y^{(i)})
h_n(X)
\right)\\
&=
\det\left(
\sum_{m+n=\lambda_i-\mu_j-i+j}
h_m\left(
x^{(i)}/(y^{(i)}\cup p^{(j)})
\right)
h_n(X)
\right).
\end{align*}
\end{proof}

\begin{example}[Jacobi-Trudi formula for $g_{\lambda/\mu}(X;t_1,\dots,t_{r-1})$]
Let $\bx=\bp=[\bt]$ and $\by=\bm{\emptyset}$.
Assume $r\geq \ell(\lambda)$.
Then Proposition \ref{prop:det_formula_for_skew_multi} implies
\begin{align*}
&s_{\lambda/\mu}(X)_{[\bt]}
=\det\left(
\sum_{m=0}^\infty
h_{m}
\left(
(t_1,\dots,t_{i-1})/(t_1,\dots,t_{j-1})
\right)
h_{\lambda_i-\mu_j-i+j-m}(X)
\right)_{1\leq i,j\leq r},
\end{align*}
where
\[
h_{m}
\left(
(t_1,\dots,t_{i-1})/(t_1,\dots,t_{j-1})
\right)
=\begin{cases}
h_m(t_i,t_{i+1},\dots,t_{j-1}), & i<j,\\
\delta_{m,0}, & i=j,\\
e_m(-t_j,-t_{j+1},\dots,-t_{i-1}), & i>j.
\end{cases}
\]
This determinantal formula coincides with the Jacobi-Trudi formula of the refined dual skew Grothendieck function given in \cite[Theorem 4.15]{motegi2020refined}.
Thus we have
\begin{equation}
s_{\lambda/\mu}(X)_{[\bt]}=g_{\lambda/\mu}(X;t_1,\dots,t_{r-1}).
\end{equation}
\end{example}

\begin{example}[Jacobi-Trudi formula for $g_{\lambda/\mu}(X)$]
Put $t=(-\beta,-\beta,\dots)$.
Since $h_m(\overbrace{-\beta,\dots,-\beta}^N)=\binom{m+N-1}{m}(-\beta)^m=\binom{-N}{m}\beta^m$ and $e_m(\overbrace{\beta,\dots,\beta}^N)=\binom{N}{m}\beta^m$, we have the Jacobi-Trudi formula
\begin{align*}
g_{\lambda/\mu}(X)
=\det\left(
\sum_{m=0}^\infty
\binom{j-i}{m}
\beta^m
h_{\lambda_i-\mu_j-i+j-m}(X)
\right).
\end{align*}
\end{example}

\section*{Acknowledgments}

The author is very grateful to Hiroshi Naruse for letting me know the inspiring preprint \cite{motegi2020refined}.
This work is partially supported by JSPS Kakenhi Grant Number 19K03605.

\bibliographystyle{plain}
\bibliography{Groth}

\end{document}